\documentclass[11pt,leqno]{amsart}

\usepackage[english]{babel}
\usepackage{url}
\usepackage[latin1]{inputenc}

\usepackage[all,2cell,emtex]{xy}
\usepackage{amssymb,amsmath,bbm,xr,a4wide,color,stmaryrd}
\usepackage[mathscr]{euscript}
\usepackage{mathrsfs}


%
%


\newcommand{\HH}{\mathbf H}
\newcommand{\HM}{\mathbf H_{\mathrm{M}}}
\newcommand{\HMBM}{\mathbf H^{\mathrm{M},\mathrm{BM}}}
\newcommand{\HMx}[1]{\mathbf H_{\mathrm{M},#1}}
\newcommand{\Ht}{\mathbf H_{\mathrm{MW}}}
\newcommand{\HtBM}{\mathbf H^{\mathrm{MW},\mathrm{BM}}}
\newcommand{\Hlgt}{\mathbf H^{\mathrm{MW}}}
\newcommand{\HMet}{\mathbf H_{\mathrm{M},\et}}
\newcommand{\Htet}{\mathbf H_{\mathrm{MW},\et}}
\newcommand{\Htc}{\mathbf H_{\mathrm{MW},\mathrm{c}}}

\newcommand{\DMg}{\mathrm{DM}}
\newcommand{\DMcdh}{\mathrm{DM_{cdh}}}

\newcommand{\cupp}{{\scriptstyle \cup}}
\newcommand{\capp}{{\scriptstyle \cap}}

\newcommand{\T} {\mathscr T}
\newcommand{\E} {\mathbb E}
\newcommand{\F} {\mathbb F}

\DeclareMathOperator{\shg}{Sh}
\newcommand{\DAg}[1]{\Der_{\AA^1}(#1)}
\newcommand{\DAeg}[1]{\Der_{\AA^1}^{eff}(#1)}
\newcommand{\smg}{\mathrm{Sm}}

\newcommand{\un}{\mathbbm 1}

\newcommand{\piso}{\mathfrak p}


\newcommand{\MW}{\mathrm{MW}}




\newcommand{\DA}{\Der_{\AA^1}(k,R)}
\newcommand{\DAet}{\Der_{\AA^1,\et}(k,R)}
\newcommand{\DMt}{\widetilde{\mathrm{DM}}(k,R)}
\newcommand{\DM}{\mathrm{DM}(k,R)}
\newcommand{\SH}{\mathrm{SH}(k)}

\newcommand{\DMtx}[1]{\widetilde{\mathrm{DM}}_{#1}(k,R)}
\newcommand{\DMx}[1]{\mathrm{DM}_{#1}(k,R)}
\newcommand{\DAb}[1]{\Der_{\AA^1}(#1,R)}



\def\sKMW{\mathbf{K}^{\mathrm{M\hspace{-.2ex}W}}} 
\def\Wi{\mathbf{W}}

\def\H{\mathrm{H}}
\DeclareMathOperator{\Hom}{Hom}

\DeclareMathOperator{\uHom}{\underline{Hom}} 

\DeclareMathOperator{\wCH}{\widetilde{CH}}
\DeclareMathOperator{\CH}{CH}
\DeclareMathOperator{\spec}{Spec}

\DeclareMathOperator{\ilim}{\varinjlim}


\DeclareMathOperator{\Th}{\mathrm{Th}} 

\DeclareMathOperator{\Comp}{C}
\DeclareMathOperator{\K}{K}
\DeclareMathOperator{\Der}{D}

\newcommand{\derL}{\mathbf{L}}
\newcommand{\derR}{\mathbf{R}}

\DeclareMathOperator{\cK}{\mathcal K}



\newcommand{\ZZ} {\mathbb Z}


\renewcommand{\AA} {\mathbb A}
\newcommand{\PP} {\mathbb P}
\newcommand{\GG} {\mathbb{G}_m}


\newcommand{\zar}{\mathrm{Zar}}
\newcommand{\et}{\mathrm{\acute et}}

\title{The Milnor-Witt motivic ring spectrum and its associated theories}

\author{Fr\'ed\'eric D\'eglise}
\address{Institut Math\'ematique de Bourgogne - UMR 5584, Universit\'e de Bourgogne, 9 avenue Alain Savary, BP 47870, 21078 Dijon Cedex, France}
\email{frederic.deglise@ens-lyon.fr}
\urladdr{http://perso.ens-lyon.fr/frederic.deglise/}
 
\author{Jean Fasel}
\address{Institut Fourier - UMR 5582, Universit\'e Grenoble-Alpes, CS 40700, 38058 Grenoble Cedex 9, France}
\email{jean.fasel@gmail.com}
\urladdr{https://www.uni-due.de/~adc301m/staff.uni-duisburg-essen.de/Home.html}

\thanks{The first author was supported by the ANR (grant No. ANR-12-BS01-0002).}
\date{\today}

\newtheorem{thm}{Theorem}[subsection]
\newtheorem{prop}[thm]{Proposition}
\newtheorem*{propi}{Proposition}

\theoremstyle{remark} 
\newtheorem{rem}[thm]{Remark}

\newtheorem{ex}[thm]{Example}

\theoremstyle{definition} 
\newtheorem{df}[thm]{Definition}
\newtheorem{num}[thm]{}

\numberwithin{equation}{thm}

\newtheorem{thm*}{Theorem}



\begin{document}

\begin{abstract}
We build a ring spectrum representing Milnor-Witt motivic
 cohomology, as well as its \'etale local version and show how to
 deduce out of it three other theories: Borel-Moore homology,
 cohomology with compact support and homology.
 These theories, as well as the usual cohomology, are defined
 for signular schemes and satisfy the properties of their motivic analog
 (and more), up to considering more general twists.
 In fact, the whole formalism of these four theories can be functorially
 attached to any ring spectrum, giving finally maps between 
 the Milnor-Witt motivic ones to the classical motivic ones.
\end{abstract}

\maketitle

\setcounter{tocdepth}{3}
\tableofcontents

\section*{Introduction}

One of the nice feature, and maybe part of the success
 of Voevodsky's theory of sheaves with transfers 
 and motivic complexes is to provide a rich cohomological theory.
 Indeed, apart from defining motivic cohomology, Voevodsky also
 obtains other theories: Borel-Moore motivic homology, 
 (plain) homology and motivic cohomology with compact support.
 These four theories are even defined for singular $k$-schemes
 and, when $k$ has characteristic $0$ (or with good resolution of
 singularities assumptions), they satisfy nice properties, such
 as duality. Finally, again in characteristic $0$,
 they can be identified with known theories:
 Borel-Moore motivic cohomology agrees with
 Bloch's higher Chow groups and homology (without twists) with
 Suslin homology. This is described
 in \cite[chap. 5, 2.2]{FSV}.\footnote{The results of Voevodsky
 have been generalized in positive characteristic $p$, after 
 taking $\ZZ[1/p]$-coefficients, using the results of Kelly \cite{Kel}.
 See \cite[Sec. 8]{CD5}.}
 
It is natural to expect that the Milnor-Witt version of motivic complexes
 developed in this book allows one to get a similar formalism. That is what
 we prove in this chapter. However, we have chosen a different path
from Voevodsky's, based on the tools at our disposal nowadays.
 In particular, we have not developed a theory of relative 
 $\mathrm{MW}$-cycles, nevertheless an interesting problem which we leave for
 future work. Instead, we rely on two classical theories, that of
 ring spectra from algebraic topology and that of Grothendieck' six
 functors formalism established by Ayoub in $\AA^1$-homotopy along the
 lines of Voevodsky.
 
In fact, it is well known since the axiomatization of Bloch
 and Ogus (see \cite{BO}) that a corollary of the six functors formalism
 is the existence of twisted homology and cohomology related by duality.
 In the present work, we go further that Bloch and Ogus in several directions. 
 First, we establish
 a richer structure, made of Gysin morphisms (wrong-way functoriality),
 and establish more properties, such as cohomological descent (see below for
 a precise description). Second, following Voevodsky's motivic picture,
 we show that the pair of theories, cohomology and Borel-Moore homology,
 also comes with a compactly supported pair of theories, cohomology
 with compact support and (plain) homology. And finally we consider
 more general twists for our theories, corresponding to the tensor product
 with the Thom space of a vector bundle.

Indeed, our main case of interest, $\mathrm{MW}$-motivic cohomology,
 is an example of non orientable cohomology theory. In practice,
 this means there are no Chern classes, and therefore no Thom classes
 so that one cannot identify the twists by a Thom space with a twist
 by an integer.\footnote{Recall that given an oriented cohomology
 theory $\E^{**}$ and a vector bundle $V/X$ of rank $n$,
 the Thom class of $V/X$ provides an isomorphism
 $\E^{**}(\Th(E)) \simeq \E^{*-2n,*-n}(X)$. See \cite{Panin03} for
 the cohomological point of view or \cite{Deg12} for the ring spectrum
 point of view.} In this context, it is especially relevant to take care
 of a twist of the cohomology or homology of a scheme $X$
 by an arbitrary vector bundle over $X$, and in fact by a virtual
 vector bundle over $X$.\footnote{See Paragraph \ref{num:basic_Thom}
 for this notion. 
 If one is interested in cohomology/homology up to isomorphism, then
 one can take instead of a virtual vector bundle a class in the K-theory
 ring $K_0(X)$ of $X$ (see Remark \ref{rem:twists&K0}).}
 Our first result is to build a ring spectrum $\Ht R$
 (Definition \ref{df:M&MW_ring_sp})
 over a perfect field $k$, with coefficients in an arbitrary ring $R$, which represents
 $\mathrm{MW}$-motivic cohomology (see Paragraph \ref{num:MW-sp&concrete_coh}
 for precise statements).
 In this text, the word \emph{ring spectrum} (for instance the object $\Ht R$)
 means a commutative monoid object of the $\AA^1$-derived category
 $\DAg k$ over a field $k$ (see Section \ref{sec:6functors} for
 reminders). One can easily get from this one a commutative monoid
 of the stable homotopy category $\SH$ (see Remark \ref{rem:on_spectra}).
 Out of the ring spectrum $\Ht R$, we get as expected the following
 theories, associated with an arbitrary $k$-variety $X$ (i.e. a separated $k$-scheme of finite type) 
 and a pair $(n,v)$ where $n \in \ZZ$, $v$ is a virtual vector
 bundle over $X$:
\begin{itemize}
\item $\mathrm{MW}$-motivic cohomology, $\Ht^n(X,v,R)$,
\item $\mathrm{MW}$-motivic Borel-Moore homology, $\HtBM_n(X,v,R)$,
\item $\mathrm{MW}$-motivic cohomology with compact support, $\Htc^n(X,v,R)$,
\item $\mathrm{MW}$-motivic homology, $\Hlgt_n(X,v,R)$.
\end{itemize}
As expected, one gets the following computations of
 $\mathrm{MW}$-motivic cohomology.
\begin{propi}
Assume $k$ is a perfect field, $X$ a smooth $k$-scheme and
 $(n,m)$ a pair of integers.

Then there exists a canonical identification:
$$
\Ht^n(X,m,R)
 \simeq
 \Hom_{\DMt}\big(\tilde M(X),\un(m)[n+2m]\big)=\Ht^{n+2m,m}(X,R)
$$
using the notations of \cite{DF1}.

If in addition, we assume $k$ is infinite, then one has the following
 computations:
$$
\Ht^n(X,m,\ZZ)=\begin{cases}
\wCH^m(X) & \text{if } n=0, \\
\H^n_\zar(X,\sKMW_0) & \text{if } m=0, \\
\H^{n+2m}_\zar(X,\tilde\ZZ(m)) & \text{if } m>0, \\
\H^{n+m}_\zar(X,\Wi) & \text{if } m<0.
\end{cases}
$$
\end{propi}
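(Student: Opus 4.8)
The plan is to treat the two displays separately: the first is a formal consequence of the construction of $\Ht R$, the second repackages the computations of \cite{DF1}.

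\emph{The first identification.} Unwinding the definition of the cohomology $\Ht^n(X,v,R)$ attached to $\Ht R$ through the six functor formalism and specializing $v$ to the trivial bundle $m$ of rank $m$ (whose Thom twist is the Tate twist $(m)[2m]$), one gets, for $f\colon X\to\spec k$ the structure morphism,
$$\Ht^n(X,m,R)=\Hom_{\DAg X}\big(\un_X,\,f^*\Ht R(m)[n+2m]\big).$$
As $X$ is smooth, $f^*$ has a left adjoint $f_\#$ with $f_\#\un_X=M(X)$, so this group is $\Hom_{\DAg k}(M(X),\Ht R(m)[n+2m])$. By Definition \ref{df:M&MW_ring_sp} one has $\Ht R=\gamma_*\un$, where $\gamma^*\colon\DAg k\to\DMt$ is the functor adjoining Milnor--Witt transfers and $\gamma_*$ its right adjoint; adjunction together with $\gamma^*M(X)=\tilde M(X)$ then identifies it with $\Hom_{\DMt}(\tilde M(X),\un(m)[n+2m])$, which is $\Ht^{n+2m,m}(X,R)$ by the definition of Milnor--Witt motivic cohomology (Paragraph \ref{num:MW-sp&concrete_coh} and \cite{DF1}). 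This holds over any perfect $k$.

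\emph{The cases $m\ge 0$.} Assume now $k$ infinite and $R=\ZZ$. For $m\ge 0$ the weight-$m$ Milnor--Witt motivic complex $\tilde\ZZ(m)$ is, as a complex of Nisnevich sheaves, both $\AA^1$-local (being of Suslin type) and Nisnevich-local, so $\Ht^{p,m}(X,\ZZ)=\mathbb H^p_\nis(X,\tilde\ZZ(m))$; and since its cohomology sheaves are strictly $\AA^1$-invariant, hence admit Rost--Schmid resolutions on smooth $k$-schemes, Nisnevich and Zariski hypercohomology agree. With $p=n+2m$ this is the case $m>0$, and the case $m=0$ is its specialization, using $\tilde\ZZ(0)\simeq\sKMW_0$ (a sheaf in degree $0$) so that $\mathbb H^n_\zar(X,\tilde\ZZ(0))=\H^n_\zar(X,\sKMW_0)$.

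\emph{The cases $n=0$ and $m<0$.} For $n=0$ and $m>0$, run the coniveau spectral sequence $\H^p_\zar(X,\underline{H}^q(\tilde\ZZ(m)))\Rightarrow\mathbb H^{p+q}_\zar(X,\tilde\ZZ(m))$. The complex $\tilde\ZZ(m)$ sits in cohomological degrees $\le m$ with $\underline{H}^m(\tilde\ZZ(m))\simeq\sKMW_m$ (the Milnor--Witt analogue of the Nesterenko--Suslin--Totaro theorem, resting on Morel's computation of $\underline{\pi}_0^{\AA^1}(\GG^{\wedge m})$ with transfers), while the contraction identity $\tilde\ZZ(m)_{-p}\simeq\tilde\ZZ(m-p)[-p]$ (Milnor--Witt cancellation) forces each lower sheaf $\underline{H}^q(\tilde\ZZ(m))$, $q<m$, to have Gersten resolution of length $\le q$, so $\H^p_\zar(X,\underline{H}^q(\tilde\ZZ(m)))=0$ for $p>q$. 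Hence the only contribution in total degree $2m$ is $\H^m_\zar(X,\sKMW_m)=\wCH^m(X)$. For $m<0$, one computes the negative Tate twists directly: iterating $\uHom_{\DMt}(\un(1),\mathcal F)\simeq\mathcal F_{-1}[1]$ with Morel's contraction formula and the stabilization $\sKMW_j\simeq\Wi$ for $j<0$ gives $\un(m)\simeq\Wi[-m]$ in $\DMteZ$; feeding this into the first display and using Nisnevich--Zariski comparison for $\Wi$ gives $\Hom_{\DMt}(\tilde M(X),\un(m)[n+2m])=\H^{n+m}_\zar(X,\Wi)$.

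\emph{Main obstacle.} The first display is purely formal. In the second, the delicate point is the $n=0$ case: having avoided relative Milnor--Witt cycles, one cannot deduce $\Ht^{2m,m}(X,\ZZ)\simeq\wCH^m(X)$ from higher Chow--Witt groups as in Voevodsky's picture, and must instead control the cohomology sheaves of $\tilde\ZZ(m)$ in all degrees well enough to collapse the coniveau spectral sequence onto $\sKMW_m$. The analogous structural input for $m<0$ (identifying $\un(m)$ with a shift of the Witt sheaf) is the other place where content is needed; both lean on \cite{DF1} and on Morel's theory of strictly $\AA^1$-invariant sheaves over an infinite perfect field.
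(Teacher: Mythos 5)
Your treatment of the first display coincides with the paper's: \ref{num:trivial_comput_coh} makes exactly the same reduction, first using Example \ref{ex:coh_smooth} (the $p_\sharp \dashv p^*$ adjunction over a smooth $X$ with $\Th(\AA^m_X)=\un_X(m)[2m]$), then the adjunction $\derL\tilde\gamma^*\dashv\tilde\gamma_*$ with $\derL\tilde\gamma^*M(X)=\tilde M(X)$, to land in $\Hom_{\DMt}(\tilde M(X),\un(m)[n+2m])$. For the second display the paper does \emph{not} prove anything: it simply cites \cite[Prop.\ 4.1.2, Cor.\ 4.2.6]{DF1}, having deliberately chosen the ring-spectrum route precisely so as not to redo cycle-level computations. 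So your re-derivation is a different path through the material. It does capture the ingredients of the cited argument (strict $\AA^1$-invariance of the cohomology sheaves of $\tilde\ZZ(m)$, hence Zariski $=$ Nisnevich via Rost--Schmid; the identification $\underline H^m(\tilde\ZZ(m))\simeq\sKMW_m$; the stabilization $\tilde\ZZ(m)\simeq\Wi[-m]$ for $m<0$), but two points in the $n=0$ case should be tightened. First, what you write down is the hypercohomology spectral sequence, not the coniveau spectral sequence (the latter filters by codimension of support). Second, the assertion that $\underline H^q(\tilde\ZZ(m))$ has Gersten resolution of length $\le q$ is not what the contraction formula gives you; read literally for $q<0$ it claims a Beilinson--Soul\'e-type vanishing which is not available. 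What one actually extracts from $\tilde\ZZ(m)_{-c}\simeq\tilde\ZZ(m-c)[-c]$ is that the degree-$c$ term of the Rost--Schmid complex of $\underline H^q(\tilde\ZZ(m))$ involves $\underline H^{q-c}(\tilde\ZZ(m-c))$, and for $c>m$ this forces $q=m$ (since $\tilde\ZZ(j)$ with $j<0$ is a single Witt sheaf in degree $j$). Hence for $q<m$ the Rost--Schmid complex vanishes in degrees $>m$, giving $\H^p_\zar(X,\underline H^q(\tilde\ZZ(m)))=0$ for $p>m$, which (with $p+q=2m$, $q<m\Rightarrow p>m$) is exactly the vanishing needed to collapse the spectral sequence onto $\H^m_\zar(X,\sKMW_m)=\wCH^m(X)$. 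With that replacement your sketch is sound, but it is work the paper deliberately outsources.
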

In fact, the first identification follows from the definition
 and basic adjunctions (see Paragraph \ref{num:trivial_comput_coh})
 while the second one was proved in \cite{DF1} 
 (as explained in \ref{num:MW-sp&concrete_coh}).
 
In fact, though the ring spectrum $\Ht R$ is our main case of interest,
 because of the previous computation,
 the four theories defined above,
 as well as their properties that we will give below,
 are defined for an arbitrary ring spectrum $\E$ --- and indeed, this
 generality is useful as will be explained afterwards.
 The four theories associated with $\E$ enjoy
 the following functoriality properties (Section \ref{sec:functoriality})
 under the following assumptions:
\begin{itemize}
\item $f:Y \rightarrow X$ is an arbitrary morphism of $k$-varieties
 (in fact $k$-schemes for cohomology);
\item $p:Y \rightarrow X$ is a morphism of $k$-varieties which is either
 smooth or such that
 $X$ and $Y$ are smooth over $k$.
 In any case, $p$ is a local complete intersection morphism
 and one can define its virtual tangent bundle denoted by $\tau_p$.
\end{itemize}

\hspace{-2.2cm}
{\renewcommand{\arraystretch}{1.5}
\begin{tabular}{|c|p{2cm}|c|p{2cm}|c|}
\hline
&\multicolumn{2}{|c|}{natural variance}
 & \multicolumn{2}{c|}{Gysin morphisms} \\
\hline
Theory & additional assumption on $f$ &  & additional assumption on $p$ &  \\
\hline
cohomology & none & $\E^n(X,v) \xrightarrow{f^*} \E^n(Y,f^{-1}v)$
 & proper & $\E^n(Y,p^{-1}v+\tau_p) \xrightarrow{p_*} \E^n(X,v)$ \\
\hline
BM-homology & proper & $\E_n^{BM}(Y,f^{-1}v) \xrightarrow{f_*} \E_n^{BM}(X,v)$
 & none & $\E_n^{BM}(X,v) \xrightarrow{p^*} \E_n^{BM}(Y,p^{-1}v-\tau_p)$ \\
\hline
c-cohomology & proper & $\E^n_c(X,v) \xrightarrow{f^*} \E^n_c(Y,f^{-1}v)$
 & none & $\E^n(Y,p^{-1}v+\tau_p) \xrightarrow{p_*} \E^n(X,v)$ \\
\hline
homology & none & $\E_n(Y,f^{-1}v) \xrightarrow{f_*} \E_n(X,v)$
 & proper & $\E_n(X,v) \xrightarrow{p^*} \E_n(Y,p^{-1}v-\tau_p)$ \\
\hline
\end{tabular}}
\medskip

Given a closed immersion $i:Z \rightarrow X$ between arbitrary $k$-varieties
 with complement open immersion $j:U \rightarrow X$,
 and a virtual vector bundle $v$ over $X$,
 there exists the so-called \emph{localization long exact sequences}
 (Paragraph \ref{num:localization_BM&c}):
\begin{align*}
\E^{BM}_n(Z,i^{-1}v)
& \xrightarrow{i_*} \E^{BM}_n(X,v)
 \xrightarrow{j^*} \E^{BM}_n(U,j^{-1}v)
 \rightarrow \E^{BM}_{n-1}(Z,i^{-1}v), \\
\E_c^n(U,j^{-1}v)
& \xrightarrow{j_*} \E_c^n(X,v)
 \xrightarrow{i^*} \E_c^n(Z,i^{-1}v)
 \rightarrow \E_c^{n+1}(U,j^{-1}v).
\end{align*}

There exists the following products for a $k$-variety $X$ (or simply a 
 $k$-scheme in the case of cup-products), and couples $(n,v)$, $(m,w)$
 of integers and virtual vector bundles on $X$:
 
\medskip
{\renewcommand{\arraystretch}{2}
\begin{tabular}{|c|c|c|}
\hline
Name & pairing & symbol \\
\hline
cup-product
 & $\E^n(X,v) \otimes \E^m(X,w) \rightarrow \E^{n+m}(X,v+w)$
 & $x \cupp y$ \\
\hline
cap-product 
 & $\E_n^{BM}(X,v) \otimes \E^m(X,w) \rightarrow \E^{BM}_{n-m}(X,v-w)$
 & $x \capp y$ \\
\hline
cap-product with support
 & $\E_n^{BM}(X,v) \otimes \E^m_c(X,w) \rightarrow \E_{n-m}(X,v-w)$
 & $x \capp y$ \\
\hline
\end{tabular}}
\medskip

Given a smooth $k$-scheme $X$ with tangent bundle $T_X$ there exists
 (Definition \ref{df:fdl_class})
 a \emph{fundamental class} $\eta_X \in \E_0^{BM}(X,T_X)$
 such that the following morphisms,  called the \emph{duality isomorphisms},
\begin{align*}
\E^n(X,v) &\rightarrow \E^{BM}_{-n}(X,T_X-v),
 x \mapsto \eta_X \capp y \\
\E^n_c(X,v) &\rightarrow \E_{-n}(X,T_X-v),
 x \mapsto \eta_X \capp y
\end{align*}
are isomorphisms (Theorem \ref{thm:duality}).

Finally, the four theories satisfy the following descent properties.
 Consider a cartesian square 
$$
\xymatrix@=10pt{
W\ar^k[r]\ar_g[d]\ar@{}|\Delta[rd] & V\ar^f[d] \\
Y\ar_i[r] & X
}
$$
of $k$-varieties (or simply $k$-schemes in the case of cohomology
 (we refer the reader to Paragraph \ref{num:basic_6functors}
 for the definition of Nisnevich and $\mathrm{cdh}$ distinguished
 applied to $\Delta$).
 Put $h=i \circ g=f \circ k$.
 We let $v$ be a virtual vector bundle over $X$.
 Then:

\medskip
\hspace{-1.8cm}
{\renewcommand{\arraystretch}{2}
\begin{tabular}{|c|c|}
\hline
Assumption on $\Delta$ & descent long exact sequence \\
\hline
Nisnevich or cdh
 & $\E^n(X,v)
 \xrightarrow{i^*+f^*} \E^n(Y,i^{-1}v) \oplus \E^n(V,f^{-1}v)
 \xrightarrow{k^*-g^*} \E^n(W,h^{-1}v)
 \rightarrow \E^{n+1}(X,v)$ \\
 & $\E_n(W,h^{-1}v)
 \xrightarrow{k_*-g_*} \E_n(Y,i^{-1}v) \oplus \E_n(V,f^{-1}v)
 \xrightarrow{i_*+f_*} \E_n(X,v)
 \rightarrow \E_{n-1}(X,v)$ \\
\hline
Nisnevich
 & $\E^{BM}_n(X,v)
 \xrightarrow{i^*+f^*} \E^{BM}_n(Y,i^{-1}v) \oplus \E^{BM}_n(V,f^{-1}v)
 \xrightarrow{k^*-g^*} \E^{BM}_n(W,h^{-1}v)
 \rightarrow \E^{BM}_{n-1}(X,v)$ \\
 & $\E_c^n(W,h^{-1}v)
 \xrightarrow{k_*-g_*} \E_c^n(Y,i^{-1}v) \oplus \E_c^n(V,f^{-1}v)
 \xrightarrow{i_*+f_*} \E_c^n(X,v)
 \rightarrow \E_c^{n+1}(X,v)$ \\
\hline
$\mathrm{cdh}$
 & $\E^{BM}_n(W,h^{-1}v)
 \xrightarrow{k_*-g_*} \E^{BM}_n(Y,i^{-1}v) \oplus \E^{BM}_n(V,f^{-1}v)
 \xrightarrow{i_*+f_*} \E^{BM}_n(X,v)
 \rightarrow \E^{BM}_{n-1}(X,v)$ \\
 & $\E^n_c(X,v)
 \xrightarrow{i^*+f^*} \E^n_c(Y,i^{-1}v) \oplus \E^n_c(V,f^{-1}v)
 \xrightarrow{k^*-g^*} \E^n_c(W,h^{-1}v)
 \rightarrow \E^{n+1}_c(X,v)$ \\
\hline
\end{tabular}}
\medskip

As the reader can guess, the four theories are functorial in the ring
 spectrum $\E$. And in fact, out of the construction of \cite{DF1},
 one gets a canonical morphism of ring spectra
 (Paragraph \ref{num:MW-regulators}):
$$
\varphi:\Ht R \rightarrow \HM R
$$
from the $\mathrm{MW}$-motivic ring spectrum to Voevodsky's motivic
 ring spectrum (also simply called the motivic Eilenberg-MacLane
 spectrum), the two ring spectra considered over $k$ and with coefficients
 in an arbitrary ring of coefficients $R$.\footnote{Interestingly, this map 
 can be compared with Beilinson's classical regulator.}
 
 Therefore, one deduces natural maps, compatible with the functorialities
 described above, from the four $\mathrm{MW}$-motivic theories
 to their motivic analog, which can be identified with the versions
 defined by Voevodsky in \cite[chap. 5]{FSV} when the characteristic
 exponent of $k$ is invertible in $R$ (as we recall in paragraphs
 \ref{num:mot_coh} and \ref{num:BM_motivic}).

Note finally that we also construct the \'etale analog of 
 the $\mathrm{MW}$-motivic and motivic ring spectra, 
 which are linked with their classical (Nisnevich) counterparts by
 canonical morphisms (see again \ref{num:MW-regulators}).

\section*{Plan of the paper}

As said previously,
 this paper is an application of our previous work and of general
 motivic $\AA^1$-homotopy. So we have tried to give complete reminders
 for a non specialist reader.
 
In Section 1, we first recall the formalism of the $\AA^1$-derived
 category, as introduced by Morel, and the associated six functors
 formalism as constructed by Ayoub following Voevodsky.
 Then we give a brief account of the theory of ring spectra,
 specialized in the framework of the $\AA^1$-derived category.

In Section 2, we construct the four theories associated with
 an arbitrary ring spectrum and establish the properties listed 
 above.
 
Finally in Section 3, we apply these results to the particular case
 of $\mathrm{MW}$-motivic cohomology, as well as its \'etale version,
 and seemingly the classical case of motivic cohomology.
 We consider in more details the case of cohomology and Borel-Moore
 homology, and conclude this paper with the canonical maps
 relating these ring spectra.

\section*{Special thanks}

\section*{Conventions} \label{conventions}

If $S$ is a base scheme, we will say \emph{$S$-varieties} for separated
 $S$-schemes of finite type.

We will simply call a symmetric monoidal category $\mathcal C$ monoidal. We generically denote by $\un$
 the unit object of a monoidal category. When this category
 depends on a scheme $S$, we also use the generic notation $\un_S$.

In the last section, 
 we will fix a perfect base field $k$
 and a coefficient ring $R$.
 
\section{Motivic homotopy theory and ring spectra}

\subsection{Reminder on Grothendieck's six functors formalism}

\label{sec:6functors}

\begin{num}
Let us fix a base scheme $S$.
 We briefly recall the construction of Morel's $\PP^1$-stable
 and $\AA^1$-derived category over $S$ using \cite{CD3} as a reference text.
 The construction has also been recalled in \cite{DF1} in the particular case
 where $S$ is the spectrum of a (perfect) field.

Let $\shg(S)$ be the category of Nisnevich sheaves of abelian groups over
 the category of smooth $S$-schemes $\smg_S$.
 This is a Grothendieck abelian category\footnote{Recall that an abelian category is Grothendieck abelian
  if it admits small coproducts, a family of generators and filtered colimits are exact.
  As usual, one gets that the category $\shg(S)$ is Grothendieck abelian
  from the case of presheaves and the adjunction $(a,\mathcal O)$ where $a$ is
  the associated sheaf functor, $\mathcal O$ the obvious forgetful functor.}.
 Given a smooth $S$-scheme $X$, one denotes by $\ZZ_S(X)$ the Nisnevich sheaf associated
 with the presheaf $Y \mapsto \ZZ[\Hom_S(Y,X)]$. The essentially small family $\ZZ_S(X)$
 generates the abelian category $\shg(S)$. The category $\shg(X)$ admits a closed 
 monoidal structure, whose tensor product is defined by the formula:
\begin{equation}\label{eq:tensor}
F \otimes G=\ilim_{X/F,Y/G} \ZZ_S(X \times_S Y)
\end{equation}
where the limit runs over the category whose objects are couples of morphisms
 $\ZZ_S(X) \rightarrow F$ and $\ZZ_S(Y) \rightarrow G$ and morphisms are given
 by couples $(x,y)$ fitting into commutative diagrams of the form:
$$
\xymatrix@=10pt{
\ZZ_S(X)\ar[rd]\ar^x[rr] && \ZZ_S(X'),\ar[ld]
 & \ZZ_S(Y)\ar[rd]\ar^y[rr] && \ZZ_S(Y').\ar[ld] \\
& F & & & G &
}
$$
 Note in particular
 that $\ZZ_S(X) \otimes \ZZ_S(Y)=\ZZ_S(X \times_S Y)$. We let the reader check that this definition coincides with the one given in \cite[1.2.14]{DF1} when $S=k$.

According to \cite{CD3}, 
 the category $\Comp(\shg(S))$ of complexes with coefficients in $\shg(S)$ admits 
 a monoidal model structure whose weak equivalences are quasi-isomorphisms and:
\begin{itemize}
\item the class of \emph{cofibrations} is given by the smallest class
 of morphisms of complexes closed under suspensions, pushouts,
 transfinite compositions and retracts generated by the inclusions
\begin{equation}
\ZZ_S(X) \rightarrow C\big(\ZZ_S(X) \xrightarrow{Id} \ZZ_S(X)\big)[-1]
\end{equation}
for a smooth $S$-scheme $X$.
\item \emph{fibrations} are given by the epimorphisms of complexes whose
 kernel $C$ satisfies the classical \emph{Brown-Gersten property}:
 for any cartesian square of smooth $S$-schemes
$$
\xymatrix@=10pt{
W\ar^k[r]\ar_q[d] & V\ar^p[d] \\
U\ar_j[r] & X
}
$$
such that $j$ is an open immersion, $p$ is \'etale and induces an isomorphism of
 schemes $p^{-1}(Z) \rightarrow Z$ where $Z$ is the complement of $U$ in $X$ endowed with its reduced subscheme structure, the resulting square of complexes of abelian
 groups
$$
\xymatrix@=10pt{
C(X)\ar[r]\ar[d] & C(V)\ar[d] \\
C(U)\ar[r] & C(W)
}
$$
is homotopy cartesian.
\end{itemize}
Indeed, from Examples 2.3 and 6.3 of \emph{op. cit.}, one gets a descent structure
 $(\mathcal G,\mathcal H)$
 (Def. 2.2 of \emph{op. cit.}) on $\shg(S)$
 where $\mathcal G$ is the essential family of generators constituted by the sheaves $\ZZ_S(X)$
 for $X/S$ smooth, and $\mathcal H$ is constituted by the complexes of the form
$$
0 \rightarrow \ZZ_S(W) \xrightarrow{q_*-k_*} \ZZ_S(U) \oplus \ZZ_S(V)
 \xrightarrow{j_*+p_*} \ZZ_S(X) \rightarrow 0.
$$
This descent structure is flat (\textsection 3.1 of \emph{loc. cit.})
 so that 2.5, 3.5, 5.5 gives
 the assertion about the monoidal model structure described above.
 Note moreover that this model structure is proper, combinatorial and satisfies
 the monoid axiom.
\end{num}

\begin{rem} \label{rem:compact_gen}
The descent structure defined in the preceding paragraph is also bounded
 (\textsection 6.1 of \emph{loc. cit.}). This implies in particular that
 the objects $\ZZ_S(X)$, as complexes concentrated in degree $0$,
 are compact in
 the derived category $\Der(\shg(S))$ (see Th. 6.2 of \emph{op. cit.}).
 Moreover, one can describe explicitly the subcategory of $\Der(\shg(S))$ 
 generated by these objects
 (see \emph{loc. cit.}).
\end{rem}

\begin{num}
Recall now that we get the $\AA^1$-derived category
 by first $\AA^1$-localizing the model category $\Comp(\shg(S))$,
 which amounts to invert in its homotopy category $\Der(\shg(S))$
 morphisms of the form
$$
\ZZ_S(\AA^1_X) \rightarrow \ZZ_S(X)
$$
for any smooth $S$-scheme $X$. One gets the so called $\AA^1$-local
 Nisnevich descent model structure (cf. \cite[5.2.17]{CD3}),
 which is again proper monoidal. One denotes by $\DAeg S$
 its homotopy category.
 Then one stabilizes the latter model category with respect to
 Tate twists, or equivalently with respect to the object:
$$
\un_S\{1\}=\mathrm{coker}\big(\ZZ_S(\{1\}) \rightarrow \ZZ_S(\GG)\big).
$$
This is based on the use of symmetric spectra (cf. \cite[5.3.31]{CD3}),
 called in our particular case \emph{Tate spectra}.
 The resulting homotopy category, denoted by $\DAg S$ is triangulated monoidal
 and is characterized by the existence of an adjunction of triangulated
 categories
$$
\Sigma^\infty:\DAeg S \leftrightarrows \DAg S:\Omega^\infty
$$
such that $\Sigma^\infty$ is monoidal and the object
 $\Sigma^\infty(\ZZ_S\{1\})$ is $\otimes$-invertible 
 in $\DAg S$. As usual, one denotes by $K\{i\}$ the tensor product
 of any Tate spectrum $K$ with the $i$-th tensor power of
 $\Sigma^\infty(\ZZ_S\{1\})$.
 Besides, we also use the more traditional Tate twist:
$$
\un_S(1)=\un_S\{1\}[-1].
$$
\end{num}

\begin{rem}\label{rem:generators}
 Extending Remark \ref{rem:compact_gen},
 let us recall that the Tate spectra of the form
 $\Sigma^\infty \ZZ_S(X)\{i\}$, $X/S$ is smooth and $i \in \ZZ$,
 are compact and form a family of generators
 of the triangulated category $\DAg S$ in the sense that
 every object of $\DAg S$ is a homotopy colimit of spectra
 of the preceding form (see \cite[5.3.40]{CD3}).
\end{rem}

\begin{num}\label{num:basic_Thom}
\textit{Thom spaces of virtual bundles}.--
It is important in our context to introduce more general twists
 (in the sense of \cite[D\'efinition 1.1.39]{CD3}).
 Given a base scheme $X$, and a vector bundle $V/X$ one defines the Thom
 space associated with $V$, as a Nisnevich sheaf over $\smg_X$,
 by the following formula:
$$
\Th(V)=\mathrm{coker}\big(\ZZ_X(V^\times) \rightarrow \ZZ_X(V)\big),
$$
where $V^\times$ denotes the complement in $V$ of the zero section.
 Seen as an object of $\DAg X$, which we still denote by $\Th(V)$,
 it becomes $\otimes$-invertible
 --- as it is locally of the form $\Th(\AA^n_X) \simeq \un_X(n)[2n]$.
 Therefore, we get a functor
$$
\Th:\mathrm{Vec}(X) \rightarrow \mathrm{Pic}(\DAg X)
$$
from the category of vector bundles over $X$ to that of
 $\otimes$-invertible Tate spectra. 
According to \cite[4.1.1]{Riou},
 given any exact sequence of vector bundles:
\begin{equation*}\tag{$\sigma$}
0 \rightarrow V' \rightarrow V \rightarrow V'' \rightarrow 0,
\end{equation*}
one gets a canonical isomorphism
$$
\Th(V) \xrightarrow{\epsilon_\sigma} \Th(V') \otimes \Th(V'')
$$
allowing to canonically extend the preceding functor
 to a functor from the category $\mathcal K(X)$ 
 of virtual vector bundles over $X$ introduced in \cite[\S 4]{Deligne87}
 to $\otimes$-invertible objects of $\DAg X$
$$
\Th:\mathcal K(X) \rightarrow \mathrm{Pic}(\DAg X)
$$
sending direct sums to tensors products.
\end{num}

\begin{rem}
The isomorphism classes of objects of $\mathcal K(X)$
 gives the K-theory ring $K_0(X)$ of $X$. In other words,
 neglecting morphisms, the construction recalled above associates
 to any element $v$ of $K_0(X)$ a canonical isomorphism class of 
 Tate spectra $\Th(v)$ which satisfies the relation:
 $\Th(v+w)=\Th(v) \otimes \Th(w)$.
\end{rem}

\begin{num}\label{num:basic_funct}
Let us finally recall the basic functoriality satisfied by sheaves
 and derived categories introduced previously.

Let $f:T \rightarrow S$ be a morphism of schemes.
 We get a morphism of sites
 $f^{-1}:\smg_S \rightarrow \smg_T$ defined by  $X/S \mapsto (X \times_S T/T)$
 and therefore an adjunction of categories of abelian Nisnevich sheaves:
\begin{equation}\label{eq:pullback}
f^*:\shg(S) \rightarrow \shg(T):f_*
\end{equation}
such that $f_*(G)=G \circ f^{-1}$ and
$$
f^*(F)=\ilim_{X/F} \ZZ_T(X \times_S T)
$$
where the colimit runs over the category of morphisms $\ZZ_S(X) \rightarrow F$.
 Recall that $f^*$ is not exact in general.

If in addition $p=f$ is smooth, one gets another morphism of sites:
$$
p_\sharp:\smg_T \rightarrow \smg_S,
 (Y \rightarrow T) \mapsto (Y \rightarrow T  \xrightarrow p S).
$$
One can check that $p^*(F)=F \circ p_\sharp$ and we get an adjunction
 of additive categories:
$$
p_\sharp:\shg(T) \rightarrow \shg(S):p^*
$$
such that:
\begin{equation}\label{eq:pushforward}
p_\sharp(G)=\ilim_{Y/G} \ZZ_S(Y \rightarrow T \xrightarrow f S),
\end{equation}
this time the colimit runs over the category of morphisms
 $\ZZ_T(Y) \rightarrow G$.

Using formulas \eqref{eq:tensor}, \eqref{eq:pullback}
 and \eqref{eq:pushforward}, 
 one can check the following basic properties:
\begin{enumerate}
\item \textit{Smooth base change formula}.-- For any cartesian square
 of schemes
$$
\xymatrix@=10pt{
Y\ar^q[r]\ar_g[d] & X\ar^f[d] \\
T\ar^p[r] & S
}
$$
such that $p$ is smooth, the canonical map
$$
q_\sharp g^* \rightarrow f^*p_\sharp
$$
is an isomorphism.
\item \textit{Smooth projection formula}.-- 
 For any smooth morphism $p:T \rightarrow S$
 and any Nisnevich sheaves $G$ over $T$ and $F$ over $S$,
 the canonical morphism:
$$
p_\sharp(G \otimes p^*(F)) \rightarrow p_\sharp(G) \otimes F
$$
is an isomorphism.
\end{enumerate}
We refer the reader to \cite[1.1.6, 1.1.24]{CD3} for the definition
 of the above canonical maps. Note that the properties stated above
 shows that $\shg$ is a $\smg$-premotivic abelian category
 in the sense of \cite[1.4.2]{CD3} (see also \cite[Ex. 5.1.4]{CD3}).

According to the theory developed in \cite[\textsection 5]{CD3},
 the adjunctions $(f^*,f_*)$ and $(p_\sharp,p^*)$ for $p$ smooth
 can be derived and induce triangulated functors
\begin{align*}
\derL f^*:\DAg S & \leftrightarrows \DAg T:\derR f_*, \\
\derL p_\sharp:\DAg S & \leftrightarrows \DAg T:p^*.
\end{align*}
 By abuse of notations,
 we will simply denote the derived functors by $f^*, f_*, p_\sharp$.
 Then the analogues of smooth base change
 and smooth projection formulas stated above hold
 (see \cite[Ex. 5.3.31]{CD3}). In other words, we get a premotivic
 triangulated category (cf. \cite[1.4.2]{CD3}) which by construction
 satisfies the homotopy and stability relation (\cite[2.1.3, 2.4.4]{CD3}).
\end{num}
\begin{df}
Consider the notations of \ref{num:basic_Thom} and \ref{num:basic_funct}.

Let $S$ be a base scheme, $X/S$ a smooth scheme,
 and $v$ be a virtual vector bundle over $X$. One defines the Thom
 space of $v$ above $S$ as the object:
$$
\Th_S(v)=p_\sharp(\Th(v)).
$$
\end{df}
Of course, unless $X=S$, $\Th_S(v)$ is in general not $\otimes$-invertible
 and we do not have the relation:
 $\Th_S(v\oplus w)=\Th_S(v) \otimes \Th_S(w)$.

\begin{num}\label{num:localization}
Consider again the notations of paragraph \ref{num:basic_funct}.
One can check the so-called localization property
 for the fibered category $\DAg -$ (cf. \cite[2.4.26]{CD3}):
 for any closed immersion $i:Z \rightarrow S$ with complement
 open immersion $j:U \rightarrow S$, and any Tate spectrum $K$ over $S$,
 there exists a unique distinguished triangle in $\DAg S$:
$$
j_\sharp j^*(K) \xrightarrow{j_\star}
 K  \xrightarrow{i^\star} i_*i^*(K) \rightarrow j_\sharp j^*(K)[1]
$$
where $j_\star$ (resp. $i^\star$) is the counit (resp. unit)
 of the adjunction $(j_\sharp,j^*)$ (resp. $(i^*,i_*)$).

As we have also seen in Remark \ref{rem:generators}
 that $\DAg S$ is compactly generated,
 we can apply to it the cross-functor theorem of Ayoub and Voevodsky
 (cf. \cite[2.4.50]{CD3}) which we state here for future reference.
\end{num}

\begin{thm}\label{thm:6functors}
Consider the above notations. Then, for any separated morphism of finite type $f:Y \rightarrow X$ of schemes,
 there exists a pair of adjoint functors, the \emph{exceptional functors},
$$
f_!:\DAg Y \rightleftarrows \DAg X:f^!
$$
such that:
\begin{enumerate}
\item There exists a structure of a covariant (resp. contravariant) 
 $2$-functor on $f \mapsto f_!$ (resp. $f \mapsto f^!$).
\item There exists a natural transformation $\alpha_f:f_! \rightarrow f_*$
 which is an isomorphism when $f$ is proper.
 Moreover, $\alpha$ is a morphism of $2$-functors.
\item For any smooth separated morphism of finite type $f:X \rightarrow S$
 of schemes with tangent bundle $T_f$,
 there are canonical natural isomorphisms
\begin{align*}
\piso_f:f_\sharp & \longrightarrow f_!\big(Th_X(T_f) \otimes_X .\big) \\
\piso'_f:f^* & \longrightarrow Th_X(-T_f) \otimes_X f^!
\end{align*}
which are dual to each other
 -- the Thom premotive $Th_X(T_f)$ is $\otimes$-invertible
 with inverse $Th_X(-T_f)$.
\item For any cartesian square:
$$
\xymatrix@=16pt{
Y'\ar^{f'}[r]\ar_{g'}[d]\ar@{}|\Delta[rd] & X'\ar^g[d] \\
Y\ar_f[r] & X,
}
$$
such that $f$ is separated of finite type,
there exist natural isomorphisms
\begin{align*}
g^*f_! \xrightarrow\sim f'_!{g'}^*\, , \\
g'_*{f'}^! \xrightarrow\sim  f^!g_*\, .
\end{align*}
\item For any separated morphism of finite type $f:Y \rightarrow X$ and any Tate spectra $K$ and $L$,
 there exist natural isomorphisms
\begin{align*}
Ex(f_!^*,\otimes):
(f_!K) \otimes_X L &\xrightarrow{\ \sim\ } f_!(K \otimes_Y f^*L)\, ,\ \\
  \uHom_X(f_!(L),K) & \xrightarrow{\ \sim\ } f_* \uHom_Y(L,f^!(K))\, ,\ \\
  f^! \uHom_X(L,M)& \xrightarrow{\ \sim\ } \uHom_Y(f^*(L),f^!(M))\, .
\end{align*}
\end{enumerate}
\end{thm}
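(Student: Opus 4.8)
The plan is not to construct the exceptional functors from scratch but to deduce the statement from the general existence theorem for $(f_!,f^!)$ in an abstract motivic triangulated category, i.e. the cross-functor theorem of Ayoub and Voevodsky in the form recalled in \cite[2.4.50]{CD3}. That theorem takes as input a $\smg$-fibered monoidal triangulated category over a reasonable category of base schemes which is compactly generated by Tate twists of representable objects, satisfies the homotopy and stability properties, and satisfies the localization property; it then produces $f_!$, $f^!$ together with all of (1)--(5). So the first step is to check that $S \mapsto \DAg S$ meets each of these hypotheses. Compact generation by the spectra $\Sigma^\infty\ZZ_S(X)\{i\}$ is Remark \ref{rem:generators}; the premotivic structure together with homotopy and stability is recorded at the end of \ref{num:basic_funct}; and the localization property is \ref{num:localization}.

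Granting this verification, items (1)--(5) are read off directly from the conclusions of \cite[2.4.50]{CD3} after transcription into the present notation. The covariant, resp. contravariant, $2$-functor structures on $f \mapsto f_!$, resp. $f \mapsto f^!$, and the natural transformation $\alpha_f : f_! \to f_*$ that is an isomorphism for $f$ proper, are (1)--(2). The purity isomorphisms $\piso_f$ and $\piso'_f$ for smooth $f$ form (3); here one invokes the $\otimes$-invertibility of the Thom object $\Th_X(T_f)$, which is exactly the content of \ref{num:basic_Thom} (the functor $\Th$ lands in $\mathrm{Pic}(\DAg X)$). The exceptional base change isomorphisms of (4) and the projection and internal-Hom formulas of (5) are the remaining assertions of \emph{loc. cit.} None of this requires a new computation; it is a matter of matching statements.

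The only point that is not purely formal --- hence the step I would expect to be the real work were one to aim at a self-contained argument --- is the localization property asserted in \ref{num:localization}, since it is the single nontrivial axiom feeding the cross-functor theorem. For $\DAg{-}$ it follows from \cite[5.2.17, 2.4.26]{CD3}: one must show that for a closed immersion $i$ with open complement $j$ the canonical sequence $j_\sharp j^* \to \mathrm{id} \to i_* i^*$ is a distinguished triangle, which uses $\AA^1$-invariance and the explicit description of the generators from Remark \ref{rem:compact_gen}. Once localization is in hand, Ayoub's construction of $f_!$ is the usual dévissage: factor $f$ as $\bar f \circ j$ with $j$ an open immersion and $\bar f$ proper (Nagata compactification), set $f_! = \bar f_* j_\sharp$, prove independence of the factorization and assemble the $2$-functoriality via the cross-functor argument, and obtain $f^!$ as a right adjoint by Brown representability (available since the categories are compactly generated). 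As all of this is precisely what \cite[2.4.50]{CD3} packages, in practice the proof reduces to the hypothesis-check of the first paragraph together with a pointer to that reference for the localization property.
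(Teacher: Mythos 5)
Your proposal matches the paper's own treatment: the theorem is presented as a direct application of the cross-functor theorem of Ayoub and Voevodsky in the form \cite[2.4.50]{CD3}, with the hypothesis-check reduced to the compact generation recorded in Remark \ref{rem:generators} and the localization property of Paragraph \ref{num:localization} (itself \cite[2.4.26]{CD3}). Your extra sketch of how $f_!$ is assembled via Nagata compactification is accurate background but not required, since the paper simply delegates all of it to \emph{loc.\ cit.}
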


\begin{num}\label{num:basic_6functors}
This theorem has many important applications. Let us state
 a few consequences for future use.
\begin{itemize}
\item \textit{Localization triangles}.-- 
Consider again the assumptions of Paragraph \ref{num:localization}.
 Then one gets canonical distinguished triangles:
\begin{align}
\label{eq:localization1}
j_! j^!(K) \xrightarrow{j_\star}
 K  \xrightarrow{i^\star} i_*i^*(K) \rightarrow j_! j^!(K)[1] \\
\label{eq:localization2}
i_! i^!(K) \xrightarrow{i_\star}
 K  \xrightarrow{j^\star} j_*j^*(K) \rightarrow i_! i^!(K)[1]
\end{align}
where $j_\star$,  $j^\star$,  $i_\star$,  $i^\star$ are the unit/counit
 morphism of the obvious adjunctions.
\item \textit{Descent properties}.-- Consider a cartesian square
 of schemes:
$$
\xymatrix@=10pt{
W\ar^k[r]\ar_g[d]\ar@{}|\Delta[rd] & V\ar^f[d] \\
Y\ar_i[r] & X
}
$$
One says $\Delta$ is Nisnevich (resp. cdh) distinguished
 if $i$ is an open (resp. closed) immersion,
 $f$ is an \'etale (resp. proper) morphism and the induced
 map $(V-W) \rightarrow (X-Y)$ of underlying reduced subschemes
 is an isomorphism.

If $\Delta$ is Nisnevich or cdh distinguished,
 then for any object $K \in \DAg X$,
 there exists canonical distinguished triangles:
\begin{align}
\label{eq:descent1}
&K \xrightarrow{i^\star+f^\star} i_*i^*(K) \oplus f_*f^*(K)
 \xrightarrow{k^\star-g^\star} h_*h^*(K) \rightarrow K[1] \\
\label{eq:descent2}
&h_!h^!(K) \xrightarrow{k_\star-g_\star} i_!i^!(K) \oplus f_!f^!(K)
 \xrightarrow{i_\star+f_\star} K \rightarrow h_!h^!(K)[1]
\end{align}
where $f^\star$ (resp. $f_\star$) is the unit (resp. counit)
 of the adjunction $(f^*,f_*)$ (resp. $(f_!,f^!)$) and $h=ig=fk$.
\item \textit{Pairing}.-- Let us apply point (5) replacing $K$
 by $f^!(K)$;
 one gets an isomorphism which appears in the following
 composite map:
$$
f_!\big(f^!(K) \otimes f^*(L) \big) \xrightarrow{\ \sim\ }
 f_!f^!(K) \otimes L \xrightarrow{f_\star \otimes Id_L} K \otimes L,
$$
where $f_\star$ is the counit map for the the adjunction $(f_!,f^!)$.
 Thus by adjunction, one gets a canonical morphism:
$$
f^!(K) \otimes f^*(L) \rightarrow f^!(K \otimes L).
$$
We will see in Paragraph \ref{num:products} that this pairing induces
 the classical cap-product.
\end{itemize}
\end{num}

\begin{rem}\label{rem:chg_coeff}
Let $R$ be a ring of coefficients.
 One can obviously extend the above considerations 
 by replacing sheaves of abelian groups by sheaves of $R$-modules
 (as in \cite{CD3}).
We get a triangulated $R$-linear category $\DAg{S,R}$ depending
 on an arbitrary scheme $S$, and also obtain the six functors
 formalism described above. In brief,
 there is no difference between working with $\ZZ$-linear coefficients
 or $R$-linear coefficients.

Besides, one gets an adjunction of additive categories:
$$
\rho_R^*:\shg(S) \leftrightarrows \shg(S,R):\rho^R_*
$$
where $\rho^R_*$ is the functor that forgets the $R$-linear structure.
 The functor $\rho_R^*$ is obtained by taking the associated sheaf
 of the presheaf obtained after applying the extension of scalars functor
 for $R/\ZZ$. Note that the functor $\rho_R^*$ is monoidal.
 According to \cite[5.3.36]{CD3}, these adjoint functors can be derived
 and further induce adjunctions of triangulated categories:
\begin{equation}\label{eq:chg_coeff}
\derL \rho_R^*:\DAg S \leftrightarrows \DAg{S,R}:\derR \rho^R_*
\end{equation}
such that $\derL \rho^*_R$ is monoidal. 
\end{rem}

\subsection{Ring spectra}

Let us start with a very classical definition.
\begin{df}
Let $S$ be a base scheme.
 A ring spectrum $\E$ over $S$ is a commutative monoid 
 of the monoidal category $\DAg S$.

A morphism of ring spectra is a morphism of commutative monoids.
\end{df}
In other words, $\E$ is a Tate spectrum equipped with
 a multiplication (resp. unit) map
$$
\mu:\E \otimes \E \rightarrow \E,
 \text{ resp. } \eta:\un_S \rightarrow \E
$$
such that the following diagrams are commutative
\begin{equation}\label{eq:axiom_ring}
\xymatrix@R=10pt{
\ar@{}|{\text{Unity:}}[r] &&\ar@{}|{\text{Associativity:}}[r]
&&\ar@{}|{\text{Commutativity:}}[r] & \\
\E\ar^-{1 \otimes \eta}[r]\ar@{=}[rdd] & \E \otimes \E\ar^\mu[dd]
 & \E \otimes \E \otimes \E\ar^-{1 \otimes \mu}[r]\ar_{\mu \otimes 1}[dd]
  &  \E \otimes \E\ar^\mu[dd]
 &  \E \otimes \E\ar^\mu[rd]\ar_\gamma^\sim[dd] & \\
& & & & & \E \\
& \E
 & \E \otimes \E\ar^-\mu[r] & \E
 & \E \otimes \E\ar_\mu[ru]
}
\end{equation}
where $\gamma$ is the isomorphism exchanging factors (coming
 from the underlying symmetric structure of the monoidal category
 $\DAg S$).

\begin{ex}\label{ex:trivial_ring}
\begin{enumerate}
\item The constant Tate spectrum $\un_S$ over $S$ is an obvious example of
 ring spectrum over $S$. Besides, for any ring spectrum $(\E,\mu,\eta)$
 over $S$, the unit map $\eta:\un_S \rightarrow \E$ is a morphism
 of ring spectra.
\item Let $k$ be a fixed base (a perfect field in our main example).
 Let $\E$ be a ring spectrum over $k$.
 Then for any $k$-scheme $S$, with structural morphism $f$, we get a canonical
 ring spectrum structure on $f^*(\E)$ as the functor $f^*$ is monoidal.

In this situation, we will usually denote by $\E_S$ this ring
 spectrum. The family of ring spectra $(\E_S)$ thus defined
 is a cartesian section of the fibered category $\DAg -$.
 It forms what we call an absolute ring spectrum over the category of
 $k$-schemes in \cite{Deg16}.
\end{enumerate}
\end{ex}

\begin{num}\label{num:weak_monoidal}
We finally present a classical recipe in motivic homotopy theory to
 produce ring spectra. Let us fix a base scheme $S$.

Suppose given a triangulated monoidal category $\T$ and
 an adjunction of triangulated categories
$$
 \phi^*:\DAg S \leftrightarrows \T:\phi_*
$$
such that $\phi^*$ is monoidal.

Then, for a couple of objects $K$ and $L$ of $\T$,
 we get a canonical map:
$$
\nu_{\K,L}:
\phi_*(K) \otimes \phi_*(L)
 \xrightarrow{\phi_\star} \phi_*\phi^*\big(\phi_*(K) \otimes \phi_*(L)\big)
 \xrightarrow{\sim} \phi_*\big(\phi^*\phi_*(K) \otimes \phi^*\phi_*(L)\big)
 \xrightarrow{\phi^\star \otimes \phi^\star}
  \phi_*(K \otimes L)
$$
where $\phi_\star$ and $\phi^\star$ are respectively the unit and
 counit of the adjunction $(\phi^*,\phi_*)$.
 Besides, one easily checks that this isomorphism is compatible
 with the associativity and symmetry isomorphisms of $\DAg S$ and $\T$.

We also get a canonical natural transformation
$$
\nu:
\un_S \xrightarrow{\phi^\star}
 \phi_*\phi^*(\un_S) \simeq \phi_*(\un^\T)
$$
which one can check to be compatible with the unit isomorphism of the
 monoidal structures underlying $\DAg S$ and $\T$.
 In other words, the functor $\phi_*$ is weakly monoidal.\footnote{Other
 possible terminologies are weak monoidal functor, lax monoidal functor.}

Then, given any commutative monoid $(M,\mu^M,\eta^M)$ in $\T$,
 one gets after applying the functor $\phi_*$ a ring spectrum with
 multiplication and unit maps
\begin{align*}
\mu:&\phi_*(M) \otimes \phi_*(M) \xrightarrow{\nu_{M,M}}
 \phi_*(M \otimes M) \xrightarrow{\phi_*(\mu^M)} \phi_*(M), \\
\eta:&\un_S \xrightarrow{\nu} \phi_*(\un^\T)
 \xrightarrow{\phi_*(\eta^M)} \phi_*(M).
\end{align*}
The verification of the axioms of a ring spectrum comes from the
 fact that $\phi_*$ is weakly monoidal.
\end{num}

\begin{ex}\label{ex:main_ringsp}
The main example we have in mind is the case where $M$
 is the unit object $\un^\T$ of the monoidal category $\T$:
$$
\E=\phi_*(\un^\T).
$$
\end{ex}

\begin{rem}
On can also define a strict ring spectrum over a scheme $S$
 as a commutative monoid object of the underlying model category
 of $\DAg S$ --- in other words a Tate spectrum equipped with a ring structure
 such that the diagrams \eqref{eq:axiom_ring} commute in the category
 of Tate spectra, rather than in its localization with respect
 to weak equivalences.

In the subsequent cases where ring spectra will appear in this paper,
 through an adjunction $(\phi^*,\phi_*)$ as above,
 this adjunction will be derived from a Quillen adjunction of
 monoidal model categories. We can repeat the arguments above by
 replacing the categories with their underlying model categories.
 Therefore, the ring spectra of the form $\phi_*(\un^\T)$ will in
 fact be strict ring spectra. While we will not use this fact here,
 it is an information that could be useful to the reader.
\end{rem}

\begin{ex}
Let $R$ be a ring of coefficients.
 Consider the notations of Remark \ref{rem:chg_coeff}.
 Then we can apply the preceding considerations to the adjunction
 \eqref{eq:chg_coeff} so that we get a ring spectrum
$$
\HH_{\AA^1}R_X:=\derR \rho^R_*(\un_X).
$$
\end{ex}

\begin{num}\label{num:morphisms}
Consider again the notations of the paragraph preceding the remark.
As mentioned in Example \ref{ex:trivial_ring}(1),
 the ring spectrum $\phi_*(\un^\T)$ automatically comes with
 a morphism of ring spectra:
\begin{equation}\label{eq:regulator}
\un_S \rightarrow \phi_*(\un^\T).
\end{equation}
Moreover, suppose there exists another triangulated monoidal
 categories $\T'$ with an adjunction of triangulated categories:
$$
\psi^*:\T \leftrightarrows \T':\psi_*
$$
such that $\psi^*$ is monoidal.

Then one gets a canonical morphism of ring spectra:
$$
\phi_*(\un^\T)
 \rightarrow \phi_*\psi_* \psi^*(\un^\T) \simeq \phi_*\psi_*(\un^{\T'})
$$
which is compatible with the canonical morphism of
 the form \eqref{eq:regulator}.
\end{num}

\section{The four theories associated with a ring spectrum}
\label{sec:theories}
\begin{num}\label{num:notations_4theories}
In this section, we will fix a base scheme $k$
 together with a ring spectrum $(\E,\mu,\eta)$ over $k$.
 Given any $k$-scheme $X$ with structural morphism $f$,
 we denote by $\E_X=f^*(\E)$ the pullback ring spectrum over $X$
 (Example \ref{ex:trivial_ring}).
 We will still denote by $\mu$ (resp. $\eta$) the multiplication
 (resp. unit) map of the ring spectrum $\E_X$.

When $X/k$ is separated of finite type,
 it will also be useful to introduce the following notation:
$$
\E'_X=f^!(\E).
$$
Note that this is not a ring spectrum in general, but that there is a pairing 
\begin{equation}\label{eq:pre_capp}
\mu':\E'_X \otimes \E_X=f^!(\E) \otimes f^*(\E)
 \rightarrow f^!(\E \otimes \E) \xrightarrow \mu f^!(\E)=\E'_X
\end{equation}
using the last point
 of  Paragraph \ref{num:basic_6functors}.

In the following subsections, we will show how to associate
 cohomological/homological theories with $\E$ and deduce
 the rich formalism derived from the six functors formalism
 (Theorem \ref{thm:6functors}).

Note finally that the constructions will be functorial in the
 ring spectra $E$. To illustrate this fact, we will also fix
 a generic morphism of ring spectra
$$
\phi:\E \rightarrow \F.
$$
\end{num}

\subsection{Definitions and basic properties}

\begin{df}\label{df:4theories}
Let $p:X \rightarrow \spec(k)$ be a $k$-scheme,
 $n \in \ZZ$ an integer and $v \in \cK(X)$ a virtual vector bundle over $X$.

We define the cohomology of $X$ in degree $(n,v)$
 and coefficients in $\E$ as the following abelian group:
$$
\E^{n}(X,v):=
\Hom_{\DAg k}\Big(\un_k,p_*\big(p^*(\E) \otimes \Th(v)\big)[n]\Big).
$$
If $X$ is a $k$-variety,
 we also define respectively the cohomology with compact
 support, Borel-Moore homology and homology
 of $X$ in degree $(n,v)$ and coefficients in $\E$ as:
\begin{align*}
\E^n_c(X,v)&:=
\Hom_{\DAg k}\Big(\un_k,p_!\big(p^*(\E) \otimes \Th(v)\big)[n]\Big), \\
\E_{n}^{BM}(X,v)&:=
\Hom_{\DAg k}\Big(\un_k,p_*\big(p^!(\E) \otimes \Th(-v)\big)[-n]\Big), \\
\E_{n}(X,v)&:=
\Hom_{\DAg k}\Big(\un_k,p_!\big(p^!(\E) \otimes \Th(-v)\big)[-n]\Big). \\
\end{align*}
We will sometime use the abbreviations
 \emph{c-cohomology} and \emph{BM-homology} for cohomology with compact support
 and Borel-Moore homology respectively.

Finally, when one replaces $v$ by an integer $m \in \ZZ$,
 we will mean that $v$ is the (opposite of the) trivial vector bundle of
 rank $\vert m\vert$.
\end{df}
We will describe below the properties satisfied by these four theories,
 which can be seen as a generalization of the classical 
 Bloch-Ogus formalism (see \cite{BO}).

\begin{rem}\label{rem:twists&K0}
It is clear from the construction of Paragraph \ref{num:basic_Thom}
 and from the above definition that cohomology and cohomology with compact
 support (resp. Borel-Moore homology and homology) depends covariantly
 (resp. contravariantly) upon the virtual vector bundle $v$
 --- \emph{i.e.} with respect to morphism of the category
 $\mathcal K(X)$.
 In particular, if one consider these theories up to isomorphism,
 one can take for $v$ a class in the $K$-theory ring $K_0(X)$
 of vector bundles over $X$.
\end{rem}

\begin{ex}\label{ex:coh_smooth}
Let us assume that $X$ is a smooth $k$-scheme, with structural
 morphism $p$. Consider a couple of integers $(n,m) \in \ZZ^2$.

Then, one gets the following computations:
\begin{align*}
E^n(X,m)
 &=\Hom_{\DAg k}\big(\un_k,p_*\big(p^*(\E) \otimes \Th(\AA^m_X)\big)[n]\big) \\
 &=\Hom_{\DAg k}\big(\un_k,p_*p^*\big(\E \otimes \Th(\AA^m_k)\big)[n]\big) \\
 &\stackrel{(1)}=\Hom_{\DAg k}\big(p_\sharp p^*(\un_k),\E \otimes \Th(\AA^m_k)[n]\big) \\
 &\stackrel{(2)}=\Hom_{\DAg k}\big(\ZZ_k(X),\E(m)[n+2m]\big) \\
 &=\E^{n+2m,m}(X). \\
\end{align*}
The identification (1) comes from the (derived) adjunctions
 described in Paragraph \ref{num:basic_funct}
 and (2) comes from the definition of $p^*$ (resp. $p_\sharp$)
 --- see again Paragraph \ref{num:basic_funct}.

So for smooth $k$-schemes and constant virtual vector bundles,
 the cohomology theory just defined agree (up to change of twists)
 with the classical cohomology represented by $\E$.
\end{ex}

\begin{rem}\label{rem:relative_MWspectrum}
Using the conventions stated in the beginning of this section,
 one can rewrite the previous definitions as follows:
\begin{align*}
\E^{n}(X,v)
 & =\Hom_{\DAg X}\big(\un_X,\E_X \otimes \Th(v)[n]\big), \\
\E^{n}_{c}(X,v)
 & =\Hom_{\DAg k}\big(\un_k,p_!(\E_X \otimes \Th(v))[n])\big), \\
\E_{n}^{BM}(X,v)
 & =\Hom_{\DAg X}\big(\un_X,\E'_X \otimes \Th(-v)[-n]\big), \\
\E_{n}(X,v)
 & =\Hom_{\DAg k}\big(\un_k,p_!(\E'_X \otimes \Th(-v))[-n]\big).
\end{align*}
In particular, if one interpret $\E_X'$ as a \emph{dual} of $\E_X$,
 our definition of Borel-Moore homology is analogue
 to that of Borel and Moore relative to singular homology
 (see \cite{BM}).
\end{rem}

\begin{num}
Assume $p:X \rightarrow \spec(k)$ is separated of finite type.
 From the natural transformation $\alpha_p:p_! \rightarrow p_*$
 of Theorem \ref{thm:6functors}(2),
 one gets canonical natural transformations:
\begin{align*}
\E^{n}_{c}(X,v) & \rightarrow \E^{n}(X,v), \\
\E_{n}(X,v) & \rightarrow \E_{n}^{BM}(X,v)
\end{align*}
which are isomorphisms whenever $X/k$ is proper.
\end{num}

\begin{rem}\label{rem:pre_homotopy}
Consider an arbitrary $k$-scheme $X$.

One must be careful about the homotopy invariance property.
 Indeed, if $v$ is a virtual bundle over $\AA^1_X$ which comes from $X$,
 that is $v=\pi^{-1}(v_0)$ where $\pi:\AA^1_X \rightarrow X$ is the
 canonical projection, then one gets:
$$
\E^n(\AA^1_X,v) \simeq \E^n(X,v_0)
$$
from the homotopy property of $\DAg X$ --- more precisely,
 the isomorphism $Id \xrightarrow \sim \pi_*\pi^*$.

This will always happen if $X$ is regular.
 But in general, $v$ could not be of the form $\pi^{-1}(v_0)$
 and there is no formula as above.

Similarly, if $v=p^{-1}(v_0)$, one gets:
$$
\E_n(\AA^1_X,v) \simeq \E_n(X,v_0).
$$
Note finally there is no such formula for c-cohomology or
 BM-homology.
\end{rem}

\begin{num}\label{num:morphisms_ringsp}
It is clear that the morphism of ring spectra $\phi:\E \rightarrow \F$
 induces morphisms of abelian groups, all denoted by $\phi_*$:
\begin{align*}
\E^{n}(X,v) &\rightarrow \F^{n}(X,v) \\
\E^{n}_c(X,v) &\rightarrow \F^{n}_c(X,v) \\
\E_{n}^{BM}(X,v) &\rightarrow \F_{n}^{BM}(X,v) \\
\E_{n}(X,v) &\rightarrow \F_{n}(X,v).
\end{align*}
\end{num}

\subsection{Functoriality properties} \label{sec:functoriality}

\begin{num} \label{num:basic_functoriality}
\textit{Basic functoriality}.
Let $f:Y \rightarrow X$ be a morphism of $k$-schemes
 and consider $(n,v) \in \ZZ \times \cK(X)$.

Letting $p$ (resp. $q$) be the projection of $X/k$ (resp. $Y/k$),
 we deduce the following maps in $\DAg X$, where in the second one
 we have assume that $p$ and $q$ are separated of finite type
\begin{align*}
\Th(v) \otimes p^*(\E)
 \xrightarrow{f^\star} f_*f^*(\Th(v) \otimes p^*\E)
 \stackrel{(1)} \simeq f_*\big(\Th(f^{-1}v) \otimes q^*\E\big) \\
f_!\big(\Th(f^{-1}v) \otimes q^!\E\big)
 \stackrel{(2)}\simeq f_!f^!(\Th(v) \otimes p^!\E)
 \xrightarrow{f_\star} \Th(v) \otimes p^!(\E)
\end{align*}
where $f^\star$ (resp. $f_\star$) is the unit (resp. counit) map
 of the adjunction $(f^*,f_*)$ (resp. $(f_!,f^!)$)
 and the isomorphism (1) (resp. (2)) follows from the fact $f^*$ is monoidal
 (resp. $\Th(v)$ is $\otimes$-invertible).

Composing respectively with $p_*$ and $p_!$, we get canonical morphisms:
\begin{align*}
p_*\big(\Th(v) \otimes p^*\E\big)
 &\xrightarrow{\ \pi(f)\ } q_*\big(\Th(f^{-1}v) \otimes q^*\E\big) \\
q_!\big(\Th(f^{-1}v) \otimes q^!\E\big)
 &\xrightarrow{\ \pi'(f)\ }  p_!\big(\Th(v) \otimes p^!\E\big),
\end{align*}
which induces the following pullback and pushforward maps:
\begin{align*}
\E^{n}(X,v) &\xrightarrow{\ f^*\ } \E^{n}(Y,f^{-1}v) \\
\E_{n}(Y,f^{-1}v) &\xrightarrow{\ f_*\ }  \E_{n}(X,v).
\end{align*}
It is straightforward to check that these maps are compatible with composition,
 turning cohomology (resp. homology) into a \emph{contravariant}
 (resp. \emph{covariant})
 functor with respect to all $k$-schemes 
 (resp. all $k$-varieties).

Assume now that $f$ is proper.
Then from Therorem \ref{thm:6functors}(2),
 on gets a canonical isomorphism $\alpha_f:f_! \simeq f_*$
 and the map $\pi(f)$, $\pi'(f)$ respectively induces canonical morphisms:
\begin{align*}
\E^{n}_{c}(X,v) &\xrightarrow{\ f^*\ } \E^{n}_c(Y,f^{-1}v) \\
\E_{n}^{BM}(Y,f^{-1}v) &\xrightarrow{\ f_*\ }  \E_{n}^{BM}(X,v).
\end{align*}
Again, notably because $\alpha_f$ is compatible with composition,
 these maps are compatible with composition so
 that cohomology with compact support
 (resp. Borel-Moore homology) is a \emph{contravariant}
 (resp. \emph{covariant})
 functor with respect to \emph{proper morphisms} of $k$-varieties.
\end{num}

\begin{rem}
With that functoriality at our disposal, we can understand
 the homotopy property described in Remark \ref{rem:pre_homotopy}
 as follows. Given any scheme $X$ and any virtual bundle $v_0$ over $X$,
 the canonical projection $\pi:\AA^1_X \rightarrow X$ induces
 isomorphisms:
\begin{align*}
\pi^*:&\E^{n}(X,v_0) \rightarrow \E^{n}(\AA^1_X,\pi^{-1}(v_0)) ,\\
\pi_*:&\E_{n}(\AA^1_X,\pi^{-1}(v_0)) \rightarrow \E_{n}(X,v_0).
\end{align*}
\end{rem}

\begin{num}\label{num:localization_BM&c}
\textit{Localization long exact sequences}.
 One of the main
 properties of Borel-Moore homolo\-gy, as well as cohomology with
 compact support is the existence of the so-called localization long exact
 sequences.
 In our case, it follows directly from the localization triangle
 stated in Paragraph \ref{num:basic_6functors}.

Indeed for a closed immersion $i:Z \rightarrow X$ of $k$-varieties
 with complement open immersion $j:U \rightarrow X$,
 and a virtual vector bundle $v$ over $X$,
 one gets localization sequences:
\begin{align*}
\E^{BM}_n(Z,i^{-1}v)
& \xrightarrow{i_*} \E^{BM}_n(X,v)
 \xrightarrow{j^*} \E^{BM}_n(U,j^{-1}v)
 \rightarrow \E^{BM}_{n-1}(Z,i^{-1}v), \\
\E_c^n(U,j^{-1}v)
& \xrightarrow{j_*} \E_c^n(X,v)
 \xrightarrow{i^*} \E_c^n(Z,i^{-1}v)
 \rightarrow \E_c^{n+1}(U,j^{-1}v).
\end{align*}
More explicitly, the first (resp. second) exact sequence is obtained by
 using the distinguished triangle \eqref{eq:localization2} with $K=\E'_X$
 (resp. \eqref{eq:localization1} with $K=\E_X$)
 and applying the cohomological functor $\Hom_{\DAg X}(\un_X,-)$.
 Note we also use the identifications $i_!=i_*$
 (resp. $j^!=j^*$) which follows from Theorem \ref{thm:6functors}
 point (2) (resp. (3)).
\end{num}

\begin{num}\label{num:Gysin}
\textit{Gysin morphisms}.
Let us fix a morphism $f:Y \rightarrow X$ of $k$-schemes which is separated
 of finite type and consider the notations
 of Remark \ref{rem:relative_MWspectrum}.

Assume $f$ is smooth with tangent bundle $\tau_f$.
 Then, according to Theorem \ref{thm:6functors}(3) and
 the $\otimes$-invertibility of Thom spectra,
 we get a canonical isomorphism:
$$
\piso'_f:f^!(\E_X) \simeq f^*(\E_X) \otimes \Th(\tau_f)
 =\E_Y \otimes \Th(\tau_f).
$$
Suppose now that $X$ and $Y$ are smooth $k$-varieties,
 with respective structural morphisms $p$ and $q$.
 Then $f$ is a local complete intersection morphism, and has for
 relative virtual tangent bundle the virtual bundle in $\cK(Y)$:
$$
\tau_f=[T_q]-[f^{-1}(T_p)].
$$
Then one can compute $q^!\E$ in two ways:
\begin{align*}
q^!\E & \stackrel{(1)}\simeq q^*(\E) \otimes \Th(T_q)
 =\E_Y \otimes \Th(T_q) \\
& =f^!p^!(\E) \stackrel{(2)}\simeq f^! \big( p^*(\E) \otimes \Th(T_p) \big)
 \stackrel{(3)} \simeq f^!(\E_X) \otimes \Th(f^{-1}T_p)
\end{align*}
where (1) and (2) are given by the relative purity isomorphisms
 of Theorem \ref{thm:6functors}(3),
 respectively for $p$ and $q$,
 and (3) follows from the fact $\Th(T_p)$ is $\otimes$-invertible.
 Putting the two formulas together, one gets as in the previous case
 an isomorphism:
\begin{equation}\label{eq:fdl_class1}
\tilde \eta_f:f^!(\E_X) \simeq \E_Y \otimes \Th(\tau_f).
\end{equation}
Similarly, using the same procedure but exchanging the role of $f^*$
 and $f^!$, one gets a canonical isomorphism:
\begin{equation}\label{eq:fdl_class2}
\tilde \eta'_f:f^*(\E'_X) \simeq \E'_Y \otimes \Th(-\tau_f),
\end{equation}
assuming either $f$ is smooth or $f$ is a morphism of smooth $k$-varieties.

Therefore one gets using adjunctions the following trace maps:
\begin{align*}
tr_f&:f_!\big(\E_Y \otimes \Th(\tau_f)\big) \longrightarrow \E_X, \\
tr'_f&:\E'_X \longrightarrow f_*\big(\E'_Y \otimes \Th(-\tau_f)\big).
\end{align*}
We can tensor these maps with the Thom space of an arbitrary
 virtual vector bundle $v$ over $X$, and compose the map with $p_!$
 for the first one and $p_*$ for the second one to get
 the following maps:
\begin{align*}
q_!\big(q^*\E \otimes \Th(f^{-1}v+\tau_f)\big)
 &\longrightarrow p_!(p^*\E \otimes \Th(v)), \\
p_*(p^!\E_X \otimes \Th(v))
 &\longrightarrow q_*\big(q^!\E \otimes \Th(f^{-1}v-\tau_f)\big).
\end{align*}
If we assume moreover that $f$ is proper,
 then we get using the same procedure and using the identification $f_*=f_!$
 the following maps:
\begin{align*}
q_*\big(q^*\E \otimes \Th(f^{-1}v+\tau_f)\big)
 &\longrightarrow p_*(p^*\E \otimes \Th(v)), \\
p_!(p^!\E_X \otimes \Th(v))
 &\longrightarrow q_!\big(q^!\E \otimes \Th(f^{-1}v-\tau_f)\big).
\end{align*}
Let us state the result in term of the four theories in the following
 proposition.
\end{num}
\begin{prop}\label{prop:Gysin}
Let 
$f:Y \rightarrow X$ be a morphism of $k$-varieties satisfying one of the
 following assumptions:
\begin{enumerate}
\item[(a)] $f$ is smooth;
\item[(b)] $X$ and $Y$ are smooth $k$-varieties.
\end{enumerate}
Then the maps defined above induce the following
 \emph{Gysin morphisms}:
\begin{align*}
f_*:&\E^{n}_{c}(Y,f^{-1}v+\tau_f) \longrightarrow \E^{n}_{c}(X,v), \\
f^*:&\E_{n}^{BM}(X,v) \longrightarrow \E_{n}^{BM}(Y,f^{-1}v-\tau_f).
\end{align*}
Assume moreover that $f$ is proper.
 Then using again the previous constructions,
 one gets the following maps:
\begin{align*}
f_*:&\E^{n}(Y,f^{-1}v+\tau_f) \longrightarrow \E^{n}(X,v), \\
f^*:&\E_{n}(X,v) \longrightarrow \E_{n}(Y,f^{-1}v-\tau_f).
\end{align*}
These Gysin morphisms are compatible with composition.

Under assumption (a), for any cartesian square,
$$
\xymatrix@=10pt{
Y'\ar^g[r]\ar_q[d] & X'\ar^p[d] \\
Y\ar^f[r] & X
}
$$
one has the classical base change formulas:
\begin{itemize}
\item $p^*f_*=g_*q^*$ in case of cohomologies,
\item $f^*p_*=q_*g^*$ in case of homologies.
\end{itemize}
\end{prop}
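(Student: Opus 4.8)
The plan is to extract everything from the purity/base-change machinery of Theorem \ref{thm:6functors}, reducing all four Gysin morphisms to a single construction applied in the various adjoint positions. First I would recall that under assumptions (a) or (b) one has the canonical isomorphisms $\tilde\eta_f:f^!(\E_X)\simeq \E_Y\otimes\Th(\tau_f)$ and $\tilde\eta'_f:f^*(\E'_X)\simeq \E'_Y\otimes\Th(-\tau_f)$ of \eqref{eq:fdl_class1} and \eqref{eq:fdl_class2}, together with the trace maps $tr_f$ and $tr'_f$ obtained from them by the unit/counit of the adjunction $(f_!,f^!)$ (resp. $(f^*,f_*)$). Tensoring by $\Th(v)$ and applying $p_!$ or $p_*$ as in Paragraph \ref{num:Gysin} produces the displayed morphisms of objects of $\DAg k$; applying $\Hom_{\DAg k}(\un_k,-[n])$ (resp. $[-n]$) then gives the four Gysin morphisms on the level of the theories. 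The case of $\E^n$ and $\E_n$ requires $f$ proper precisely so that $\alpha_f:f_!\xrightarrow{\sim}f_*$ lets one transport the construction from the $p_!$-picture to the $p_*$-picture.

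Next I would address compatibility with composition. Given $g:Z\to Y$ and $f:Y\to X$ both satisfying (a) or (b), the composite $fg$ again satisfies the relevant hypothesis, and one has the $2$-functoriality of $f\mapsto f_!$ and $f\mapsto f^!$ from Theorem \ref{thm:6functors}(1). The key point is that the purity isomorphisms $\piso_f$, $\piso'_f$ are compatible with composition in the sense that $\piso_{fg}$ factors through $\piso_f$ and $\piso_g$ up to the canonical identification $\Th(\tau_{fg})\simeq g^*\Th(\tau_f)\otimes\Th(\tau_g)$ coming from the additivity of $\Th$ on the exact sequence of virtual tangent bundles $\tau_{fg}=g^{-1}\tau_f+\tau_g$. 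Feeding this into the definition of $\tilde\eta_f$ (which is itself built from $\piso_p,\piso_q$ in case (b), or directly from $\piso'_f$ in case (a)), one checks that the trace maps compose: $tr_{fg}=tr_f\circ f_!(tr_g\otimes \mathrm{id})$ after the appropriate Thom identifications. Then functoriality of $p_!$ and of $\Hom_{\DAg k}(\un_k,-)$ gives the claim for all four theories. The bookkeeping of the Thom twists $f^{-1}v+\tau_f$ and the associativity isomorphisms of the monoidal structure is the main source of tedium here, but nothing is conceptually delicate.

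For the base change formulas under assumption (a), I would use the exchange isomorphisms of Theorem \ref{thm:6functors}(4): for the cartesian square with $f$ smooth, $g^*f_!\xrightarrow{\sim}q_!g'^*$ and dually $q'_*g'^!\xrightarrow{\sim}g^!p_*$ — here I am relabelling the square of the statement so that $f,g$ are the smooth horizontal maps and $p,q$ the vertical base-change maps. The content is that the trace map $tr_f$ is stable under the base change $p$: this follows because $\piso'_f$ is defined via the smooth-purity datum $\piso'_f:f^*\xrightarrow{\sim}\Th(-\tau_f)\otimes f^!$, and that datum is compatible with the exchange isomorphisms $g^*f_!\simeq q_!g'^*$ by construction of the six functors in \cite{CD3} (the compatibility is part of the ``motivic triangulated category'' package). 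Combining this with the projection formula \eqref{eq:pre_capp}-style identities, i.e. Theorem \ref{thm:6functors}(5), and with the isomorphism $\tau_q\simeq q^{-1}\tau_f$ for the base-changed smooth map, one gets $p^*f_*=g_*q^*$ on c-cohomology and cohomology, and the dual identity $f^*p_*=q_*g^*$ on the two homology theories by passing to the $f^!$/$p^!$ side.

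The step I expect to be the main obstacle is the compatibility of the purity isomorphisms $\piso_f,\piso'_f$ with both composition and base change simultaneously — i.e. verifying that the trace maps $tr_f$ assemble into a genuine datum of a ``Gysin morphism'' compatible with the fibered structure. This is essentially a coherence check internal to Ayoub's/Cisinski--D\'eglise's construction, and while each individual diagram is formal, organizing them so that the Thom-twist identifications $\Th(\tau_{fg})\simeq g^*\Th(\tau_f)\otimes\Th(\tau_g)$ and $\Th(q^{-1}\tau_f)\simeq\Th(\tau_q)$ are used consistently is where care is required. Everything else reduces to applying the already-established points of Theorem \ref{thm:6functors} and the functoriality of $\Hom_{\DAg k}(\un_k,-)$.
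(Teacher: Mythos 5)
Your proof follows essentially the same route as the paper's: construct the Gysin maps from the trace maps of Paragraph \ref{num:Gysin}, reduce compatibility with composition to Ayoub's compatibility of the relative purity isomorphism with composition (\cite[2.4.52]{CD3}), and reduce the base change formulas to compatibility of purity with base change (\cite[Lemma 2.3.13]{Deg16}). The paper is simply more terse, delegating the coherence diagrams to those references; you correctly identify them as the only genuinely delicate steps.
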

Indeed, the construction of maps are directly obtained
 from the maps defined in the paragraph preceding the proposition.
 The compatibility with composition is a straightforward check once
 we use the compatibility of the relative purity isomorphism with
 composition (due to Ayoub, see \cite[2.4.52]{CD3} for the precise statement).
 The base change formulas in the smooth case is similar
 and are ultimately reduced to the compatibility of the relative purity
 isomorphism with base change
 (see for example the proof of \cite[Lemma 2.3.13]{Deg16}).

\begin{rem}
Gysin morphisms can be defined under the weaker assumption
 that $f$ is a global complete intersection.
 Similarly, the base change formula can be extended to cover
 also the case of assumption (b), as well as the general case.
 We refer the reader to \cite{DJK} for this generality, as well as for
 more details on the proofs.
\end{rem}

\begin{rem}\label{rem:morphisms_ringsp1}
According to these constructions, 
 it is clear that the map associated in Paragraph
 \ref{num:morphisms_ringsp} with the morphism of ring spectra $\phi$
 are natural in $X$ with respect to the basic functoriality
 and Gysin morphisms of each of the four theories.
\end{rem}

\subsection{Products and duality}

\begin{num}\label{num:products}
As usual, one can define a \emph{cup-product} on cohomology,
$$
\E^n(X,v) \otimes \E^m(X,w) \rightarrow \E^{n+m}(X,v+w),
 (x,y) \mapsto x \cupp y
$$
where, using the presentation of Remark \ref{rem:relative_MWspectrum}
 one defines $x \cupp y$ as the map:
\begin{align*}
\un_X & \xrightarrow{x \otimes y}
 \E_X \otimes \Th(v) \otimes \E_X \otimes \Th(w)[n+m]
 \simeq  \E_X \otimes \E_X \otimes \Th(v+w)[n+m] \\
 & \xrightarrow{\ \mu \otimes Id\ } \E_X \otimes \Th(v+w)[n+m].
\end{align*}
Here we use the fact that $p_*$ is weakly monoidal --- as the right adjoint
 of a monoidal functor; see Paragraph \ref{num:weak_monoidal}.
 One can easily check that the pullback morphism on cohomology
 is compatible with cup product
 (see for example \cite[1.2.10, (E5)]{Deg12}).

Besides one gets a \emph{cap-product}:
$$
\E_n^{BM}(X,v) \otimes \E^m(X,w) \rightarrow \E^{BM}_{n-m}(X,v-w),
 (x,y) \mapsto x \capp y
$$
defined, using again the presentation of Remark \ref{rem:relative_MWspectrum},
 as follows:
\begin{align*}
x \capp y:\un_X[n-m] & \xrightarrow{x \otimes y}
 \E'_X \otimes \Th(-v) \otimes \E_X \otimes \Th(w)
 \simeq  \E'_X \otimes \E_X \otimes \Th(-v+w) \\
 & \xrightarrow{\ \mu'\ } \E'_X \otimes \Th(-v+w).
\end{align*}
where $\mu'$ is defined in \eqref{eq:pre_capp}.

There is finally a \emph{cap-product with support}:
$$
\E_n^{BM}(X,v) \otimes \E^m_c(X,w) \rightarrow \E_{n-m}(X,v-w),
 (x,y) \mapsto x \capp y
$$
defined, using Remark \ref{rem:relative_MWspectrum}, as follows:
\begin{align*}
x \capp y:\un_k[n-m]
 & \xrightarrow{x \otimes y}
  p_*\big(\E'_X \otimes \Th(-v)\big) \otimes p_!(\E_X \otimes \Th(w)) \\
 & \xrightarrow{(1)} 
  p_!\big(p^*p_*(\E'_X \otimes \Th(-v)\big) \otimes \E_X \otimes \Th(w) \big) \\
 & \xrightarrow{ad'} 
  p_!\big(\E'_X \otimes \Th(-v) \otimes \E_X \otimes \Th(w) \big)
  \simeq  p_!\big((\E'_X \otimes \E_X \otimes \Th(-v+w)\big) \\
 & \xrightarrow{\ \mu'\ } p_!\big((\E'_X \otimes \Th(-v+w))\big).
\end{align*}
where (1) is obtained using the base change isomorphism
 of Theorem \ref{thm:6functors}(5) and $\mu'$ is defined
 in \eqref{eq:pre_capp}.
\end{num}

\begin{rem}\label{rem:PF}
These products satisfies projection formulas with respect to Gysin
 morphisms. We let the formulation to the reader (see also \cite{DJK}).)
\end{rem}

\begin{rem}\label{rem:morphisms_ringsp2}
Clearly, the map
$$
\phi_*:\E^n(X,v) \rightarrow \F^n(X,v)
$$
defined in \ref{num:morphisms_ringsp} is compatible with cup-products.
Similarly, the other natural transformations associated with 
 the morphism of ring spectra $\phi$ are compatible with cap-products.
\end{rem}

\begin{num}\label{num:fdl}
\textit{Fundamental class}.
Let us fix a smooth $k$-variety $f:X \rightarrow \spec(k)$,
 with tangent bundle $\tau_X$.

Applying theorem \ref{thm:6functors}(3) to $f$, we get an isomorphism:
\[
\piso'_f:\E_X=f^*(\E) \longrightarrow Th_X(-T_X) \otimes f^!(\E)=Th_X(-T_X) \otimes \E^\prime_X
\]
Then, in view of Remark \ref{rem:relative_MWspectrum},
 the composite map:
\[
\eta_X:\un_X\xrightarrow{\eta} \E_X\xrightarrow{\piso'_f}Th_X(-T_X) \otimes \E^\prime_X\simeq \E^\prime_X\otimes Th_X(-T_X)
\]
corresponds to a class in the Borel-Moore homology
 group $\E_0^{BM}(X,T_X)$.
\end{num}
\begin{df}\label{df:fdl_class}
Under the assumptions above, we call the class
 $\eta_X \in \E_0^{BM}(X,T_X)$ the fundamental class of
 the smooth $k$-scheme $X$ with coefficients in $\E$.
\end{df}

The following Poincar\'e duality theorem is now a mere consequence of
 the definitions and of part (3) of  Theorem \ref{thm:6functors}.
\begin{thm}\label{thm:duality}
Let $X/k$ be a smooth $k$-variety
 and $\eta_X$ its fundamental class with coefficients in $\E$
 as defined above.

Then the following morphisms
\begin{align*}
\E^n(X,v) &\rightarrow \E^{BM}_{-n}(X,T_X-v),
 x \mapsto \eta_X \capp y \\
\E^n_c(X,v) &\rightarrow \E_{-n}(X,T_X-v),
 x \mapsto \eta_X \capp y
\end{align*}
are isomorphisms.
\end{thm}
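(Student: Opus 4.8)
The plan is to show that cap-product with the fundamental class $\eta_X$ of Definition~\ref{df:fdl_class} is, at the level of the objects of $\DAg X$ representing the four theories, induced by the relative purity isomorphism
\[
\piso'_p\colon \E_X=p^*(\E)\xrightarrow{\ \sim\ }\Th(-T_X)\otimes p^!(\E)=\Th(-T_X)\otimes\E'_X
\]
of Theorem~\ref{thm:6functors}(3) (here $p\colon X\to\spec(k)$ is the structural morphism, so $T_p=T_X$), tensored with $\Th(v)$ and then pushed forward by $p_*$, resp.\ by $p_!$. Since $\Th(T_X)$ is $\otimes$-invertible this map is an isomorphism, so the theorem will follow.

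First I would unwind the cap-products of Paragraph~\ref{num:products}. Presenting, as in Remark~\ref{rem:relative_MWspectrum}, a class $y\in\E^n(X,v)$ as a morphism $\un_X[-n]\to\E_X\otimes\Th(v)$ in $\DAg X$ and viewing $\eta_X$ (Paragraph~\ref{num:fdl}) as a morphism $\un_X\to\E'_X\otimes\Th(-T_X)$, one checks directly that $\eta_X\capp y=\Phi_v\circ y$, where $\Phi_v$ is the composite
\[
\E_X\otimes\Th(v)\xrightarrow{\eta_X\otimes\mathrm{id}}\E'_X\otimes\Th(-T_X)\otimes\E_X\otimes\Th(v)\ \simeq\ \E'_X\otimes\E_X\otimes\Th(v-T_X)\xrightarrow{\mu'\otimes\mathrm{id}}\E'_X\otimes\Th(v-T_X),
\]
with $\mu'$ the pairing \eqref{eq:pre_capp}. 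Since the twist $\Th(v)$ is inert here, $\Phi_v=\Phi_0\otimes\mathrm{id}_{\Th(v)}$ for a morphism $\Phi_0\colon\E_X\to\E'_X\otimes\Th(-T_X)$, and using $\eta_X=\piso'_p\circ\eta$ (up to reordering the tensor factors, as in Paragraph~\ref{num:fdl}) one is reduced to proving the identity $\Phi_0=\piso'_p$.

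The heart of the argument is this identity. By construction $\mu'$ factors as $p^!(\E)\otimes p^*(\E)\to p^!(\E\otimes\E)\xrightarrow{p^!(\mu)}p^!(\E)$, the first arrow being the pairing of Paragraph~\ref{num:basic_6functors}; the point to establish is that this pairing $p^!(K)\otimes p^*(L)\to p^!(K\otimes L)$ is compatible with $\piso'_p$ and with the monoidality constraint of $p^*$, one of the standard coherences of the six functors formalism. Feeding in $K=L=\E$, this rewrites $\Phi_0=(\mu'\otimes\mathrm{id})\circ(\text{rearrange})\circ(\piso'_p\otimes\mathrm{id}_{\E_X})\circ(\eta\otimes\mathrm{id}_{\E_X})$ as $\piso'_p\circ\mu\circ(\eta\otimes\mathrm{id}_{\E_X})$, and the unit axiom \eqref{eq:axiom_ring} of the ring spectrum $\E$ gives $\mu\circ(\eta\otimes\mathrm{id})=\mathrm{id}$, whence $\Phi_0=\piso'_p$. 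Applying $\Hom_{\DAg X}(\un_X,-[n])$ to the isomorphism $\Phi_v$ and comparing with the presentations of $\E^n(X,v)$ and $\E^{BM}_{-n}(X,T_X-v)$ from Remark~\ref{rem:relative_MWspectrum} then yields the first duality isomorphism.

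For the second isomorphism I would repeat the analysis with the cap-product with support: there the external pairing of $\eta_X\in\E_0^{BM}(X,T_X)$ with a class in $\E^n_c(X,v)$ is assembled from the projection formula isomorphism of Theorem~\ref{thm:6functors}(5) together with the counit $p^*p_*\to\mathrm{id}$, and one must check that this external operation agrees, after the identifications above, with $p_!$ applied to the internal morphism $\Phi_v$ --- a compatibility of the projection formula with units. The resulting map $\E^n_c(X,v)\to\E_{-n}(X,T_X-v)$ is then $\Hom_{\DAg k}(\un_k,p_!(\Phi_v)[n])$, hence an isomorphism. The main obstacle is precisely this bundle of coherences: that $\mu'$, and the projection-formula pairing, are compatible with the purity isomorphism $\piso'_p$, so that cap-product with $\eta_X$ really coincides with $\piso'_p\otimes\mathrm{id}_{\Th(v)}$ pushed forward; once this is granted, the remainder --- including the bookkeeping of degrees and twists --- is formal.
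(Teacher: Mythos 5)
Your proof is correct and takes essentially the same approach as the paper: both identify the cap-product with the fundamental class as the map induced, after tensoring with $\Th(v)$, by the purity isomorphism $\piso'_p\colon\E_X\to\Th(-T_X)\otimes\E'_X$, which is an isomorphism by Theorem~\ref{thm:6functors}(3). You go further than the paper's one-paragraph argument by actually verifying the identity $\Phi_0=\piso'_p$ via the unit axiom of the ring spectrum and the (standard) compatibility of the pairing $p^!(K)\otimes p^*(L)\to p^!(K\otimes L)$ with $\piso'_p$, whereas the paper simply asserts that this ``follows from the definition.''
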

\begin{proof}
Let us consider the first map. Using Remark \ref{rem:relative_MWspectrum},
 we can rewrite it as follows:
$$
\Hom_{\DAg X}\big(\un_X,\E_X \otimes \Th(v))[n]\big)
 \rightarrow \Hom_{\DAg X}\big(\un_X,\E'_X \otimes \Th(v-T_X))[n]\big).
$$
Then it follows from the definition of the fundamental class
 (Paragraph \ref{num:fdl})
 and that of cap-products (Paragraph \ref{num:products}),
 that this map is induced by the morphism
\[
\piso'_f:\E_X=f^*(\E) \longrightarrow Th_X(-T_X) \otimes f^!(\E)=Th_X(-T_X) \otimes \E^\prime_X
\]
which is an isomorphism according to Theorem \ref{thm:6functors}(3).

The proof in the case of the second isomorphism is the same.
\end{proof}

\begin{rem}
According to this theorem, and the basic functoriality of the four
 theories (Paragraph \ref{num:basic_functoriality}),
 one gets the following exceptional functoriality for morphisms of smooth
 $k$-varieties:
\begin{itemize}
\item cohomology becomes covariant with respect to proper morphisms;
\item BM-homology becomes contravariant with respect to all morphisms;
\item c-cohomology becomes covariant with respect to all morphisms;
\item homology becomes contravariant with respect to proper morphisms.
\end{itemize}
An application of the projection formulas alluded to in Remark
 \ref{rem:PF} give that this extra functorialities coincide
 with Gysin morphisms constructed in Proposition \ref{prop:Gysin}.
\end{rem}

\begin{ex}
One deduces from the above duality isomorphisms and the localization
 long exact sequence of Paragraph \ref{num:localization_BM&c}
 that, for a closed immersion $i:Z \rightarrow X$ of smooth $k$-varieties
 with complement open immersion $j$, one has long exact sequences:
\begin{align*}
\E^n(Z,T_Z-i^{-1}v)
& \xrightarrow{i_*} \E^n(X,T_X-v)
 \xrightarrow{j^*} \E^n(U,j^{-1}(T_X-v))
 \rightarrow \E^{n+1}(Z,T_Z-i^{-1}v), \\
\E_n(U,j^{-1}(T_X-v))
& \xrightarrow{j_*} \E_n(X,T_X-v)
 \xrightarrow{i^*} \E_n(Z,T_Z-i^{-1}v)
 \rightarrow \E_{n+1}(U,j^{-1}(T_X-v)).
\end{align*}
Besides, substituting $(T_X-v)$ to $v$ and using the exact
 sequence over vector bundles over $Z$:
$$
0 \rightarrow T_Z \rightarrow i^{-1}T_X
 \rightarrow N_ZX \rightarrow 0
$$
where $N_ZX$ is the normal bundle of $Z$ in $X$,
 the above exact sequences can be written more simply as:
\begin{align*}
\E^n(Z,i^{-1}v-N_ZX)
& \xrightarrow{i_*} \E^n(X,v)
 \xrightarrow{j^*} \E^n(U,j^{-1}v)
 \rightarrow \E^{n+1}(Z,i^{-1}v-N_ZX), \\
\E_n(U,j^{-1}v)
& \xrightarrow{j_*} \E_n(X,v)
 \xrightarrow{i^*} \E_n(Z,i^{-1}v-N_ZX)
 \rightarrow \E_{n+1}(U,j^{-1}v).
\end{align*}
\end{ex}

\subsection{Descent properties}

The following result is a direct application of Paragraph
 \ref{num:basic_6functors}.
\begin{prop}
Consider a cartesian square of $k$-schemes:
$$
\xymatrix@=10pt{
W\ar^k[r]\ar_g[d]\ar@{}|\Delta[rd] & V\ar^f[d] \\
Y\ar_i[r] & X
}
$$
which is Nisnevich or cdh distinguished
 (see  Paragraph \ref{num:basic_6functors}).
 Put $h=i \circ g$ and let $v$ be a virtual bundle over $X$.

Then one has canonical long exact sequences of the form:
\begin{align*}
\E^n(X,v)
& \xrightarrow{i^*+f^*} \E^n(Y,i^{-1}v) \oplus \E^n(V,f^{-1}v)
 \xrightarrow{k^*-g^*} \E^n(W,h^{-1}v)
 \rightarrow \E^{n+1}(X,v) \\
\E_n(W,h^{-1}v)
& \xrightarrow{k_*-g_*} \E_n(Y,i^{-1}v) \oplus \E_n(V,f^{-1}v)
 \xrightarrow{i_*+f_*} \E_n(X,v)
 \rightarrow \E_{n-1}(X,v).
\end{align*}

If the square $\Delta$ is Nisnevich distinguished
 (in which case all of its morphisms are \'etale),
 one has canonical long exact sequences of the form:
\begin{align*}
\E^{BM}_n(X,v)
& \xrightarrow{i^*+f^*} \E^{BM}_n(Y,i^{-1}v) \oplus \E^{BM}_n(V,f^{-1}v)
 \xrightarrow{k^*-g^*} \E^{BM}_n(W,h^{-1}v)
 \rightarrow \E^{BM}_{n-1}(X,v) \\
\E_c^n(W,h^{-1}v)
& \xrightarrow{k_*-g_*} \E_c^n(Y,i^{-1}v) \oplus \E_c^n(V,f^{-1}v)
 \xrightarrow{i_*+f_*} \E_c^n(X,v)
 \rightarrow \E_c^{n+1}(X,v)
\end{align*}
where we have used the Gysin morphisms with respect to \'etale maps
 for BM-homology and c-cohomology.

If the square $\Delta$ is cdh distinguished (in which case all of its
 morphisms are proper),
 one has canonical long exact sequences of the form:
\begin{align*}
\E^{BM}_n(W,h^{-1}v)
& \xrightarrow{k_*-g_*} \E^{BM}_n(Y,i^{-1}v) \oplus \E^{BM}_n(V,f^{-1}v)
 \xrightarrow{i_*+f_*} \E^{BM}_n(X,v)
 \rightarrow \E^{BM}_{n-1}(X,v), \\
\E^n_c(X,v)
& \xrightarrow{i^*+f^*} \E^n_c(Y,i^{-1}v) \oplus \E^n_c(V,f^{-1}v)
 \xrightarrow{k^*-g^*} \E^n_c(W,h^{-1}v)
 \rightarrow \E^{n+1}_c(X,v)
\end{align*}
where we have used the covariance (resp. contravariance)
 of BM-homology (resp. c-cohomology) constructed
 in Paragraph \ref{num:basic_functoriality}.
\end{prop}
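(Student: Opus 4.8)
The statement is purely formal: each of the six long exact sequences is obtained by applying a cohomological functor $\Hom_{\DAg X}(\un_X,-)$ (or $\Hom_{\DAg k}(\un_k,-)$) to one of the distinguished triangles \eqref{eq:descent1}, \eqref{eq:descent2} of Paragraph \ref{num:basic_6functors}, after tensoring with the appropriate Thom twist and identifying the maps with the ones defined in Paragraphs \ref{num:basic_functoriality} and \ref{num:Gysin}. So the plan is to go triangle by triangle, in three groups matching the three blocks of the proposition.

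First, for the cohomology and homology sequences (the Nisnevich-or-cdh case): apply $\Hom_{\DAg X}(\un_X,-\otimes\Th(v)[n])$ to the triangle \eqref{eq:descent1} with $K=\E_X$. The Thom twist $\Th(v)$ is $\otimes$-invertible, so tensoring preserves the triangle; the three terms become, after the monoidality identifications $f^*(\E_X\otimes\Th(v))\simeq\E_V\otimes\Th(f^{-1}v)$ etc. already used in \ref{num:basic_functoriality}, exactly $\E_X\otimes\Th(v)$, $i_*(\E_Y\otimes\Th(i^{-1}v))\oplus f_*(\E_V\otimes\Th(f^{-1}v))$, $h_*(\E_W\otimes\Th(h^{-1}v))$. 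Taking $\Hom_{\DAg X}(\un_X,-[n])$ and rotating gives the long exact sequence; the connecting maps $i^\star+f^\star$, $k^\star-g^\star$ of the triangle become precisely the pullbacks $i^*+f^*$, $k^*-g^*$ by the very definition of these pullbacks (they are induced by the unit/counit maps $\pi(f)$ of \ref{num:basic_functoriality}). The homology sequence is the same argument applied to \eqref{eq:descent2} with $K=\E_X'=f^!(\E)$, tensored with $\Th(-v)[-n]$, using $\Hom_{\DAg k}(\un_k,p_!(-))$; here the maps $k_\star-g_\star$, $i_\star+f_\star$ are the counits of the $(f_!,f^!)$-adjunctions, which is exactly how $f_*$ on homology was defined.

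For the Nisnevich block (BM-homology and c-cohomology): since $i$ is an open immersion and $f,k,g$ are étale, all four morphisms are étale, hence in particular smooth, so the Gysin morphisms of Proposition \ref{prop:Gysin} apply, and for an étale morphism $\tau_f=0$ so there is no twist shift. One applies $\Hom(\un,-)$ to \eqref{eq:descent2} (BM-homology) resp. \eqref{eq:descent1} (c-cohomology), now with $K=\E_X'$ resp. $K=\E_X$, but using the \emph{other} variance: for an étale (hence smooth) $f$ one has the purity isomorphism $f^!\simeq f^*$ (Theorem \ref{thm:6functors}(3) with $\tau_f=0$), which converts the triangle \eqref{eq:descent2}, a priori phrased with $f_!f^!$, into the shape needed, and matches the maps $i^*+f^*$ with the étale Gysin maps on BM-homology. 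Dually for c-cohomology one uses \eqref{eq:descent1}; here $j^!=j^*$ for the open immersion part and étale purity for the $f$ part. For the cdh block: $i$ is a closed immersion and $f$ proper, so all morphisms are proper, hence $f_!=f_*$ by Theorem \ref{thm:6functors}(2), and one uses the covariance (resp. contravariance) of BM-homology (resp. c-cohomology) along proper maps already established in \ref{num:basic_functoriality}; apply $\Hom(\un,-)$ to \eqref{eq:descent2} resp. \eqref{eq:descent1} with the appropriate $K$ and twist.

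The only genuine work — and the part I expect to be slightly delicate rather than truly hard — is the bookkeeping that the boundary maps $i^\star+f^\star$, $k^\star-g^\star$, $i_\star+f_\star$, $k_\star-g_\star$ appearing in \eqref{eq:descent1}--\eqref{eq:descent2} really coincide with the theory-level maps $i^*+f^*$ etc. (including the signs) after one threads through the monoidality isomorphisms, the Thom-twist invertibility, and the identifications $i_!=i_*$, $j^!=j^*$, étale purity, or proper base change as appropriate in each line. This is entirely mechanical given that the maps $\pi(f),\pi'(f)$ in \ref{num:basic_functoriality} and the trace maps in \ref{num:Gysin} were \emph{defined} from exactly these units and counits, so no new ideas are needed; one just has to be careful that the extra functorialities promised for BM-homology/c-cohomology in the Nisnevich and cdh cases are the ones quoted in the statement. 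I would therefore present the proof as: "apply the cohomological functor $\Hom_{\DAg{}}(\un,-)$ to triangles \eqref{eq:descent1} and \eqref{eq:descent2} tensored with the relevant Thom twist, and identify the maps using Paragraphs \ref{num:basic_functoriality} and \ref{num:Gysin}," spelling out the three cases and leaving the sign/identification check to the reader as routine.
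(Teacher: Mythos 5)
Your overall plan is the same as the paper's: apply a cohomological functor (either $\Hom_{\DAg X}(\un_X,-)$ or $\Hom_{\DAg k}(\un_k,p_!(-))$) to the descent triangles \eqref{eq:descent1} and \eqref{eq:descent2} with $K=\E_X\otimes\Th(v)$ or $K=\E'_X\otimes\Th(-v)$ as appropriate, then identify the unit/counit maps with the theory-level maps of Paragraphs \ref{num:basic_functoriality} and \ref{num:Gysin}. The cohomology, homology and cdh blocks are handled correctly.

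However, in the Nisnevich block you pair the triangles with the theories the wrong way around. You say to use \eqref{eq:descent2} for BM-homology and \eqref{eq:descent1} for c-cohomology, but it must be the other way: the Nisnevich BM-homology sequence runs $X\to(Y\oplus V)\to W$, which is the shape of \eqref{eq:descent1}; the Nisnevich c-cohomology sequence runs $W\to(Y\oplus V)\to X$, which is the shape of \eqref{eq:descent2}. The étale purity isomorphism $f^!\simeq f^*$ does \emph{not} ``convert \eqref{eq:descent2} into the shape needed'' --- it identifies $f^!$ with $f^*$ and $f_!$ with $f_\sharp$, but it cannot reverse the direction of the arrows in a distinguished triangle. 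Concretely, your recipe fails because $\Hom_{\DAg X}(\un_X, i_!(-))$ does not reduce by adjunction to $\Hom_{\DAg Y}(\un_Y,-)$ when $i$ is merely an open immersion (one would need $i_!=i_*$, i.e.\ $i$ proper); only $i_*$ satisfies that adjunction, which forces \eqref{eq:descent1} for BM-homology in the Nisnevich case, with purity $i^*\simeq i^!$ then supplying $i^*(\E'_X)\simeq\E'_Y$. Dually, for Nisnevich c-cohomology one applies $\Hom_{\DAg k}(\un_k,p_!(-))$ to \eqref{eq:descent2}, using $p_!i_!=q_!$ and $i^!\simeq i^*$ to get $i^!(\E_X)\simeq\E_Y$. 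The cdh block then uses the opposite pairing ($\eqref{eq:descent2}$ for BM-homology, $\eqref{eq:descent1}$ for c-cohomology) precisely because there one has $i_!=i_*$ instead of $i^!\simeq i^*$. So the two blocks genuinely use different triangles for the same theory, which your write-up conflates.
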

\begin{proof}
The proof is a simple application of the descent properties
 obtained in Paragraph \ref{num:basic_6functors}.

For example, one gets the case of cohomology by using
 the distinguished triangles \eqref{eq:descent1} with $K=\E_X$:
$$
\E_X \xrightarrow{i^\star+f^\star} i_*(\E_Y) \oplus f_*(\E_V)
 \xrightarrow{k^\star-g^\star} h_*(\E_W) \rightarrow \E_X[1]
$$
and applying the cohomological functor $\Hom_{\DAg X}(\un_X,-)$.
 The description of the maps in the long exact sequence obtained
 follows directly from the description of the contravariant
 functoriality of cohomology (see \ref{num:basic_functoriality}).

The other exact sequences are obtained similarly.
\end{proof}

We can state the existence of the long exact sequences in the
 preceding proposition by saying that the four theories satisfies
 Nisnevich and cdh cohomological descent.\footnote{One can express
 cohomological descent for the cdh or Nisnevich topology in the style
 of \cite[Vbis]{SGA4}, or \cite[\textsection 3]{CD3},
 using the fact the four theories admits an extension to simplicial
 schemes and stating that cdh or Nisnevich hypercovers induces
 isomorphisms. The simplification of our formulation comes as
 these topologies are defined by cd-structure in the sense
 of \cite{Voecd}.} This also shows that, 
 under the existence of resolution of singularities,
 they are essentially determined by their restriction to smooth $k$-varieties.

Let us make a precise statement.
\begin{prop}\label{prop:unique_extension}
Let us assume $k$ is of characteristic $0$
 or more generally 
 that any $k$-variety admits a non singular blow-up
 and that $k$ is perfect.

Suppose one has a contravariant functor
 $H^*$ from $k$-varieties equipped with a virtual vector bundle
 $(X,v)$ to graded abelian groups, $H^n(X,v)$,
 and a natural transformation:
$$
\phi_X:\E^n(X,v) \rightarrow H^n(X,v)
$$
such that for any cdh distinguished square as in the preceding proposition,
 one has a long exact sequence:
$$
H^n(X,v)
 \xrightarrow{i^*+f^*} H^n(Y,i^{-1}v) \oplus H^n(V,f^{-1}v)
 \xrightarrow{k^*-g^*} H^n(W,h^{-1}v)
 \rightarrow H^{n+1}(X,v)
$$
which is compatible via $\phi$ with the one for $\E^*$.

Then, if $\phi_X$ is an isomorphism when $X/k$ is smooth,
 it is an isomorphism for any $k$-variety $X$.
\end{prop}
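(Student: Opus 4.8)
The statement is a standard "cdh-descent + resolution of singularities $\Rightarrow$ determined on smooth schemes" argument, of the kind that appears in \cite[chap. 5]{FSV} or \cite[Sec. 8]{CD5}. The plan is to argue by Noetherian induction on the closed subsets of $X$ (equivalently, on $\dim X$, together with an induction on the number of irreducible components of top dimension), using the hypothesis that every $k$-variety admits a non-singular blow-up to reduce to the smooth case via the cdh descent exact sequences, combined with the five lemma.

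First I would set up the induction. Let $X$ be a $k$-variety; assume the claim for all $k$-varieties of strictly smaller dimension, and for all proper closed subvarieties of $X$. Since $k$ is perfect, $X_{\mathrm{red}}$ is generically smooth, and more to the point there is a blow-up $f\colon X' \to X$ with $X'$ smooth, an isomorphism over a dense open $U = X \setminus Y$ with $Y$ a proper closed subset, and $W = f^{-1}(Y)$ a proper closed subset of $X'$ (here one may also need to shrink so that $Y \supseteq X_{\mathrm{sing}}$, which is harmless). This gives a cdh distinguished square
$$
\xymatrix@=10pt{
W\ar^k[r]\ar_g[d]\ar@{}|\Delta[rd] & X'\ar^f[d] \\
Y\ar_i[r] & X
}
$$
with $f$ proper, $i$ a closed immersion, and $f$ an isomorphism over $X \setminus Y$. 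Applying the two cdh-descent long exact sequences (for $\E^*$ and for $H^*$) and the compatibility of $\phi$ with them, I get a morphism of long exact sequences to which the five lemma applies: $\phi_{X'}$ is an isomorphism because $X'$ is smooth; $\phi_Y$ and $\phi_W$ are isomorphisms by the inductive hypothesis, since $\dim Y < \dim X$ and $\dim W < \dim X$ (as $W \to Y$ is the pullback of the blow-up, $W$ has dimension $< \dim X$ because $W$ is proper over $Y$ with fibers of dimension $< \dim X - \dim Y$... more simply, $W$ maps properly to $Y$ so $\dim W \le \dim Y + (\dim X' - \dim Y) $, but the cleanest bookkeeping is just $\dim W < \dim X$ since $W \subsetneq X'$ is nowhere dense and $X'$ has dimension $\le \dim X$). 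Hence $\phi_X$ is an isomorphism.

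A small subtlety to address: one must make sure the Noetherian induction is genuinely well-founded, i.e. that $W$ and $Y$ really are "smaller". Taking $\dim$ together with the number of irreducible components of maximal dimension as the induction parameter handles the base case $\dim X = 0$ (where $X_{\mathrm{red}}$ is already smooth, so there is nothing to prove), and handles the subtlety that $X'$ might have the same dimension as $X$ — but $X'$ is smooth so $\phi_{X'}$ is an isomorphism by hypothesis regardless, and $Y, W$ have strictly smaller dimension, so the induction closes on $(\dim Y, \dim W < \dim X)$ alone. One also uses tacitly that $\E^*$ itself satisfies cdh descent, which is the preceding proposition.

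\emph{The main obstacle} is essentially bookkeeping rather than a deep point: one must phrase the induction so that, when $\dim X = \dim X'$ and $X'$ is smooth, the argument does not circularly appeal to the conclusion for something of the same size. This is resolved because $\phi$ is \emph{assumed} to be an isomorphism on all smooth varieties — so $X'$ is an input, not an inductive step — and the only genuine recursion is on $Y$ and $W$, both of strictly smaller dimension. A secondary point is to double-check that the resolution can be chosen with $Y$ containing the singular locus and $f$ an isomorphism exactly over the complement, which is exactly the shape of a cdh distinguished square; this is guaranteed by the blow-up hypothesis in the statement (and automatic in characteristic zero by Hironaka). Beyond that, the proof is a direct application of the five lemma to the morphism of cdh-descent long exact sequences.
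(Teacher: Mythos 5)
Your proof is correct and follows essentially the same route as the paper: Noetherian induction on $\dim X$, using a non-singular blow-up to produce a cdh distinguished square, applying the five lemma to the resulting morphism of descent long exact sequences, and handling the base case $\dim X = 0$ via perfectness of $k$. The extra bookkeeping you supply about well-foundedness and the dimensions of $Y$ and $W$ is sound but is elided in the paper, which simply asserts that these dimensions drop.
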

\begin{proof}
The proof is an easy induction on the dimension of $X$.
 When $X$ has dimension $0$, it is necessarily smooth
 over $k$ as the latter field is assumed to be perfect.
 The noetherian induction argument follows from the existence
 of a blow-up $f:V \rightarrow X$ such that $V$ is smooth.
 Let $Y$ be the locus where $f$ is not an isomorphism,
 $W=V \times_X Y$. Then the dimension of $Y$ and $W$ is strictly
 less than the dimension of $X$ and $V$.
 By assumptions, $\phi_V$ is an isomorphism.
 By the inductive assumption, $\phi_Y$ and $\phi_W$ are isomorphisms.
 So the existence of the cdh descent long exact sequences,
 and the fact $\phi$ is compatible with these,
 allow to conclude.
\end{proof}

\begin{rem}
Similar uniqueness statements,
 with the same proof, hold for the other three theories.
 We let the formulation to the reader.
\end{rem}

\section{The $\MW$-motivic ring spectrum}

\subsection{The ring spectra}

\begin{num}
We will now apply the machinery described in the preceding section
 to $\mathrm{MW}$-motives. Recall from our notations
 that $k$ is now a perfect field and $R$ is a ring of coefficients.

Recall also from \cite[(3.3.6.a)]{DF1}
 that we have adjunctions of triangulated categories:
\begin{equation}\label{eq:diagram_DMs}
\begin{split}
\xymatrix@C=30pt@R=24pt{
\DA\ar@<+2pt>^{\derL \tilde \gamma^*}[r]\ar@<+2pt>^{a}[d]
 & \DMt\ar@<+2pt>^{\derL \pi^*}[r]\ar@<+2pt>^{\tilde a}[d]
     \ar@<+2pt>^{\tilde \gamma_{*}}[l]
 & \DM\ar@<+2pt>^{a^{tr}}[d]
     \ar@<+2pt>^{\pi_{*}}[l] \\
\DAet\ar@<+2pt>^{\derL \tilde \gamma^*_\et}[r]
    \ar@<+2pt>^{\derR \mathcal O}[u]
 & \DMtx{\et}\ar@<+2pt>^{\derL \pi^*_\et}[r]
	  \ar@<+2pt>^{\derR \mathcal O}[u]
		\ar@<+2pt>^{\tilde \gamma_{\et*}}[l]
 & \DMx{\et}.
    \ar@<+2pt>^{\derR \mathcal O}[u]
		\ar@<+2pt>^{\pi_{\et*}}[l]
}
\end{split}
\end{equation}
such that each left adjoint is monoidal. Therefore, one can apply the general procedure of Paragraph
 \ref{num:weak_monoidal} to deduce ring spectra from these
 adjunctions.
\end{num}
\begin{df}\label{df:M&MW_ring_sp}
Consider the above notations.

We define respectively the $R$-linear MW-spectrum, \'etale MW-spectrum,
 motivic Eilenberg-MacLane spectrum and \'etale Eilenberg-MacLane spectrum
 as follows:
\begin{align*}
\Ht R&:=\rho^R_*\tilde \gamma_*(\un), \\
\Htet R&:=\rho^R_* \derR \mathcal O \tilde \gamma_{\et*}(\un), \\
\HM R&:=\rho^R_*  \tilde \gamma_* \pi_*(\un), \\
\HMet R&:=\rho^R_* \derR \mathcal O \gamma_{\et*} \pi_{\et*}(\un).
\end{align*}
\end{df}

\begin{num}\label{num:trivial_comput_coh}
Each of these ring spectrum represents the corresponding cohomology
 on smooth $k$-schemes. This follows by adjunction using Example
 \ref{ex:coh_smooth}. Explicitly, for a smooth $k$-scheme $X$
 and integer $(n,m) \in \ZZ^2$, one gets:
\begin{align*}
\Ht^n(X,m,R)
&=\Hom_{\DAg k}\big(\Sigma^\infty \ZZ_k(X),\rho^R_*\tilde \gamma_*(\un)(m)[n+2m]\big) \\
&=\Hom_{\DMt}\big(\tilde M(X),\un(m)[n+2m]\big), \\
&=\Ht^{n+2m,m}(X,R)
\end{align*}
where the first identification follows from Example \ref{ex:coh_smooth},
 the second by adjunction --- here, $\tilde M(X)$ denotes the $\mathrm{MW}$-motive
 associated with the smooth $k$-scheme $X$, 
 following the notation of \cite{DF1}.
 The last group was introduced in \cite[4.1.1]{DF1}.

Similarly we get:
\begin{align*}
\HM^n(X,m,R)
&=\Hom_{\DM}\big(M(X),\un(m)[n+2m]\big), \\
\Htet^n(X,m,R)
&=\Hom_{\DMtx{\et}}\big(M_\et(X),\un(m)[n+2m]\big), \\
\HMet^n(X,m,R)
&=\Hom_{\DMx{\et}}\big(\tilde M_\et(X),\un(m)[n+2m]\big)
\end{align*}
where $M(X)$ (resp. $M_\et(X)$, $\tilde M_\et(X)$) denotes the motive
 (resp. \'etale motive, \'etale $\mathrm{MW}$-motive)
 associated with the smooth $k$-scheme $X$.
\end{num}

\begin{rem}\label{rem:on_spectra}
\begin{enumerate}
\item The Eilenberg-MacLane motivic ring spectrum defined here,
 apart the fact that we work in $\DAg X$,
 agrees with the one defined by Voevodsky (\cite[6.1]{V3}; see
 also \cite[11.2.17]{CD3}).
\item The use of the functor $\rho^R_*$ in the above definition
 can appear rather artificial. We could have equally worked within
 the $R$-linear $\AA^1$-derived category $\DAb X$.
\item As in algebraic topology, one derive from the Dold-Kan
 equivalence an adjunction of triangulated categories:
$$
N:\SH \leftrightarrows \DAg k:K
$$
see \cite[5.3.35]{CD3}. The functor $N$ is monoidal (\emph{loc. cit.})
 so that using the arguments of Paragraph \ref{num:weak_monoidal},
 the functor $N$ is weakly monoidal.
 Therefore, if one apply $K$ to any ring spectrum of the above 
 definition, one obtain a commutative monoid in the stable homotopy
 category $\SH$. From the point of view of the cohomology theory
 (and actually, the four theories), this does not change anything
 as the corresponding object do represent the same cohomology.
 This is why we have restricted our attention here to the category $\DAg k$.
 Though it does not change anything for our purpose, it is important
 to know that we have also constructed a ring spectrum in $\SH$
 representing $\mathrm{MW}$-motivic cohomology.
\end{enumerate}
\end{rem}

\subsection{Associated cohomology}

\begin{num}\label{num:MW-sp&concrete_coh}
\textit{$\mathrm{MW}$-cohomology}.
Applying Definition \ref{df:4theories},
 we can associate to the preceding ring spectra four
 cohomological/homological theories.

Our main interest is in the MW-motivic cohomology spectrum.
 First considering the associated cohomology theory,
 our definition allows to extend the definition of
 the $\mathrm{MW}$-cohomology theory of \cite[4.1.1]{DF1},
 with its product,
 to the case of possibly singular $k$-schemes.
 In characteristic $0$,
 this extension is the unique one satisfying cdh descent
 (see Proposition \ref{prop:unique_extension}).
 And finally, we have defined Gysin morphisms on cohomology,
 with respect to proper morphisms of smooth $k$-varieties
 (or proper smooth morphisms of arbitrary $k$-schemes); see Proposition
 \ref{prop:Gysin}.

Recall finally the results of \cite{DF1}, assuming that $k$ is infinite
 (in addition to be perfect).
 For a smooth $k$-scheme $X$ and a couple of integers $(n,m) \in \ZZ^2$
 we have the following computations:
$$
\Ht^n(X,m,\ZZ)=\begin{cases}
\wCH^m(X) & \text{if } n=0, \\
\H^n_\zar(X,\sKMW_0) & \text{if } m=0, \\
\H^{n+2m}_\zar(X,\tilde\ZZ(m)) & \text{if } m>0, \\
\H^{n+m}_\zar(X,\Wi) & \text{if } m<0.
\end{cases}
$$
See \cite[Cor. 4.2.6, Prop. 4.1.2]{DF1}.
\end{num}

\begin{ex} \textit{Motivic cohomology (smooth case)}.
Recall the following classical computations.
 For a smooth $k$-variety $X$ and a virtual bundle $v$ over $X$
 of rank $m$, oner has:
$$
\HM^n(X,v,\ZZ)=\begin{cases}
\mathrm{CH}^m(X) & \text{if } n=0, \\
\H^{n+2m}_\zar(X,\ZZ(m)) & \text{if } m \geq 0, \\
0 & \text{if } m<0,
\end{cases}
$$
where $\ZZ(m)$ is Suslin-Voevodsky motivic complex.\footnote{For example,
 $\ZZ(m)=\underline C_*(\ZZ^{tr}\big(\GG^{\wedge,n})\big)[-n]$,
 where $\underline C_*$ is the Suslin (singular) complex functor.
 See \cite[chap. 5]{FSV}.} Note that the computation uses the fact that
 motivic cohomology is an oriented cohomology theory
 (see for example \cite[2.1.4(2)]{Deg12}).
\end{ex}

\begin{num}\label{num:motivic_singular} \label{num:mot_coh}
\textit{Motivic cohomology (singular case)}.
For any $k$-scheme $f:X \rightarrow \spec{k}$,
 following the conventions of Paragraph \ref{num:notations_4theories},
 we consider the following ring spectrum over $X$:
$$
\HMx X:=f^*(\HM).
$$
Apart from the fact that we are working in $\DAg X$ instead of $\mathrm{SH}(X)$,
 this ring spectrum agrees with the ring spectrum defined in \cite[3.8]{CD5}.

Assume further that:
\begin{equation}\label{eq:assumption_DM}
\text{The characteristic exponent of $k$ is invertible in $R$.}
\end{equation}
Then according to \cite{CD5}, one can extend the construction
 of the category $\DM$ to an arbitrary $k$-scheme $X$ and obtain
 a triangulated $R$-linear category
$$
\DMcdh(X,R)
$$
which satisfies the six functors formalism for various $X$,
 as described in Section
 \ref{sec:6functors} (see \cite[5.11]{CD5}).
 Indeed, $\DMcdh(-,R)$ form what we called a motivic triangulated
 category in \cite[2.4.45]{CD3} --- here, the assumption
 \eqref{eq:assumption_DM} is essential.
 Note in particular that we can define, as in Paragraph \ref{num:basic_Thom},
 Thom motives of virtual bundles (see \cite[2.4.15]{CD3}).
 Given a virtual bundle $v$ over $X$, we denote by $M\Th(v)$
 this Thom motive, an object of $\DMcdh(X,R)$.

Moreover, according to \cite[11.2.16]{CD3},
 one has a natural adjunction of triangulated categories:
$$
\gamma^*:\DAg{X,R} \rightarrow \DMcdh(X,R):\gamma_*
$$
extending the adjunction
 $(\derL \pi^*\derL \tilde \gamma^*,\tilde \gamma_*\pi_*)$.
 In fact, the family of adjunctions $(\gamma^*,\gamma_*)$
 for various schemes $X$ form what
 we called a premotivic adjunction in \cite[1.4.6]{CD3}.

As a consequence, $\gamma^*$ is monoidal and commutes with functors
 of the form $f^*$ and $p_!$ while $\gamma_*$ commutes
 with functors of the form $f_*$ and $p^!$ (see \cite[2.4.53]{CD3});
 here $f$ is any morphism of $k$-schemes while $p$ is separated of finite
 type.

Note that by construction, $\gamma^*(\Th(v))=M\Th(v)$.
 As the objects $\Th(v)$ and $M\Th(v)$ are $\otimes$-invertible,
 one deduces that\footnote{The argument goes as follows. Consider the functors:
$$
\mathcal{T}h(v):K \mapsto \Th(v) \otimes K,
 \mathcal{T}h^M(v):K \mapsto M\Th(v) \otimes K.
$$
Then we obtain an isomorphism of functors:
$$
\gamma^* \circ \mathcal{T}h(v) \simeq \mathcal{T}h^M(v) \circ \gamma^*.
$$
Now, as the Thom objects are $\otimes$-invertible,
 the functors $\mathcal{T}h(v)$ and $\mathcal{T}h^M(v)$ are equivalences
 of categories. Their quasi-inverses are respectively:
 $\mathcal{T}h(-v)$ and $\mathcal{T}h^M(-v)$.
 Then from the preceding isomorphism of functors, one deduces an isomorphism
 of the right adjoint functors:
$$
\mathcal{T}h(-v) \circ \gamma_*
 \simeq \gamma_* \circ \mathcal{T}h^M(-v).
$$}
$$
\gamma_*(M\Th(v))=\Th(v).
$$

Finally, applying \cite[Prop. 4.3 and Th. 5.1]{CD5}, we also obtain
 that $\gamma_*$ commutes with $f^*$. As a consequence,\footnote{Note this
 identification is by no means obvious. In fact,
 it answers a conjecture of Voevodsky (cf. \cite[Conj. 17]{VoeOpen})
 in the particular case of the base change map $f:X \rightarrow \spec(k)$.
 See also \cite[3.3, 3.6]{CD5}.}
$$
\HMx X=f^*(\gamma_*(\un_k))\simeq \gamma_*(f^*(\un_k))=\gamma_*(\un_X).
$$
Finally, under assumption \eqref{eq:assumption_DM}, we can do
 the following computation for an arbitrary $k$-scheme $X$
 and a virtual vector bundle $v$ over $X$ of rank $m$:
\begin{align*}
\HM^n(X,v,R)
&=\Hom_{\DAg X}\big(\un_X,\HMx X \otimes \Th(v))[n]\big) \\
&=\Hom_{\DAg X}\big(\un_X,\gamma_*(\un_X) \otimes \Th(v))[n]\big) \\
&=\Hom_{\DAg X}\big(\un_X,\gamma_*(M\Th(v))[n])\big) \\
&=\Hom_{\DMcdh(X,R)}\big(\un_X,M\Th(v)[n])\big) \\
&=\Hom_{\DMcdh(X,R)}\big(\un_X,\un_X(m)[n+2m])\big) \\
\end{align*}
which uses the identifications recalled above,
 and for the last one, the fact that motivic cohomology is an oriented
 cohomology theory (equivalently, $\DMcdh$ is an oriented motivic
 triangulated category, see \cite[2.4.38, 2.4.40, 11.3.2]{CD3}

We can give more concrete formulas as follows.
 Assume $X$ is a $k$-variety.
 Recall Voevodsky has defined in \cite[chap. 5, \textsection 4.1]{FSV}
 a motivic complex $\underline C_*(X)$
 in $\DMg^{\mathrm{eff}}(k,\ZZ)$ by considering the Suslin complex of the sheaf
 with transfers $L(X)$ represented by $X$.
 With $R$-coefficients, let us put:
$$
\underline C_*(X)_R:=\underline C_*(X) \otimes^\derL_\ZZ R.
$$
 Then according to \cite[8.4, 8.6]{CD5}, one gets:
$$
\HM^n(X,v,R)=\begin{cases}
\Hom_{\DMg^{\mathrm{eff}}(k,R)}\big(\underline C_*(X)_R,R(m)[n+2m]\big)
 & \text{if } m \geq 0, \\
0 & \text{if } m<0
\end{cases}
$$
where $R(m)$ is the $R$-linear Tate motivic complex:
 $R(m)=\underline C_*(\GG^{\wedge m})_R[-m]$.
 Note also that one can compute the right hand side
 as the following cdh-cohomology group (see \cite[(8.3.1)]{CD3}:
$$
\Hom_{\DMg^{\mathrm{eff}}(k,R)}\big(\underline C_*(X),R(m)[n+2m]\big)
 =\HH^{n+2m}_{\mathrm{cdh}}\big(X,R(m)\big),
$$
where $R(m)$ is seen as a complex of cdh-sheaves on the site
 of $k$-schemes of finite type.
\end{num}

\subsection{Associated Borel-Moore homology}

\begin{num}\textit{Borel-Moore motivic $\mathrm{MW}$-homology}.
We also get the Borel-Moore $\mathrm{MW}$-homology
 of $k$-varieties, covariant (resp. contravariant)
 with respect to proper (resp. \'etale) maps,
 satisfying the localization long exact sequence
 (see Paragraph \ref{num:localization_BM&c})
 and contravariant for any smooth maps or arbitrary morphisms
 of smooth $k$-varieties (Gysin morphisms,
 Proposition \ref{prop:Gysin}).

Besides, using the duality theorem \ref{thm:duality},
 we get for a smooth $k$-scheme $X$ with tangent bundle $T_X$
 and any couple of integers $(n,m) \in \ZZ^2$
 the following computation:
$$
\HtBM_n(X,T_X-m,R)=\Ht^{2m-n,m}(X,R)
$$
with the notation of \cite{DF1}.
\end{num}

\begin{num}\textit{Borel-Moore motivic homology}.
\label{num:BM_motivic}
Let us consider the situation of Paragraph \ref{num:motivic_singular}.
 We assume further that $R$ is a localization of $\ZZ$ satisfying condition \eqref{eq:assumption_DM}.

Then we can compute Borel-Moore motivic homology, for
 a $k$-variety $f:X \rightarrow \spec(k)$
 and a virtual bundle $v/X$ of rank $m$ as follows:
\begin{align*}
\HMBM_n(X,v,R)
&=\Hom_{\DAg{X,R}}\big(\un_X,f^!(\HM R) \otimes \Th(-)[-n]\big) \\
&=\Hom_{\DAg{X,R}}\big(\un_X,f^!(\gamma_*(\un_k)) \otimes \Th(-v)[-n]\big) \\
&\stackrel{(1)}=\Hom_{\DAg{X,R}}\Big(\un_X,\gamma_*\big(f^!(\un_k) \otimes M\Th(-v)[-n]\big)\Big) \\
&=\Hom_{\DMcdh(X,R)}\big(\un_X,f^!(\un_k) \otimes M\Th(-v)[-n]\big) \\
&\stackrel{(2)}=\Hom_{\DMcdh(X,R)}\big(\un_X,f^!(\un_k)(-m)[-2m-n]\big) \\
&\stackrel{(3)}=\mathrm{CH}_m(X,n) \otimes_\ZZ R
\end{align*}
where (1) follows from the properties mentioned in
 Paragraph \ref{num:motivic_singular}, (2) as $\DMcdh$ is oriented
 and (3) using \cite[Cor. 8.12]{CD5}.
\end{num}

\subsection{Generalized regulators}

\begin{num}\label{num:MW-regulators}
As explained in Paragraph \ref{num:morphisms},
 the commutativity of Diagram \eqref{eq:diagram_DMs} automatically
 induces morphisms of ring spectra as follows:
$$
\xymatrix@=20pt{
\HH_{\AA^1}R\ar^\psi[r] & \Ht R\ar^\varphi[r]\ar[d] & \HM R\ar[d] \\
& \Htet R\ar^{\varphi_\et}[r] & \HMet R.
}
$$
As explained in Paragraph \ref{num:morphisms_ringsp} and Remarks
 \ref{rem:morphisms_ringsp1}, \ref{rem:morphisms_ringsp2},
 these morphisms induce natural transformations of the four associated
 theories, compatible with products.

In particular, given a $k$-scheme $X$ and virtual bundle $v$ over $X$
 of rank $m$, we get morphisms
$$
\HH_{\AA^1}^n(X,v,R)
 \xrightarrow{\psi_*} \HH_{\mathrm{MW}}^n(X,v,R)
 \xrightarrow{\varphi_*} \HH_{\mathrm{M}}^{n+2m,m}(X,R)
$$
where the right hand side is Voevodsky's motivic cohomology
 (see \ref{num:mot_coh}).
 In brief, these maps are compatible with all the structures on
 cohomology described in Section \ref{sec:theories}.

Assume finally the conditions of Paragraph \ref{num:BM_motivic}
 are fulfilled. Then we get natural morphisms:
$$
\HH^{BM,\AA^1}_n(X,v,R)
 \xrightarrow{\psi_*} \HtBM_n(X,v,R)
 \xrightarrow{\varphi_*} \CH_m(X,n) \otimes_\ZZ R,
$$
compatible with contravariant and covariants functorialities,
 and localization long exact sequences
 (Paragraph \ref{num:localization_BM&c}).

Of course, when $X$ is a smooth $k$-variety, 
 the two maps $\varphi_*$ (resp. $\psi_*$)
 that appear above can be compared by duality
 (Theorem \ref{thm:duality}).
 Besides, if $v=[m]$ and $n=0$, one can check the
 latter map $\varphi_*$ is simply the canonical map:
$$
\wCH_m(X) \otimes_\ZZ R \longrightarrow \CH_m(X) \otimes_\ZZ R
$$
from Chow-Witt groups to Chow groups.
\end{num}


\bibliographystyle{amsalpha}
\bibliography{MWsp}

\end{document}